\title{On Some Quaternionic Hadamard Matrices of Small Order}
\newcommand{\reals}{\mathbb{R}}
\newcommand{\complex}{\mathbb{C}}
\newcommand{\naturals}{\mathbb{N}}
\newcommand{\quaternions}{\mathbb{H}}
\newtheorem{theorem}{Theorem}
\newtheorem{lemma}{Lemma}
\newtheorem{example}{Example}
\newtheorem{definition}{Definition}
\newtheorem{proposition}{Proposition}
\newtheorem{question}{Question}
\newtheorem{corollary}{Corollary}
\newtheorem{remark}{Remark}
\begin{document}

\bibliographystyle{plain}

%  Leave these commented lines here
%\input{ELAheader-template.tex}
% ELA insert correct page number
\setcounter{page}{1}

\thispagestyle{empty}

%Insert the title of the paper
% \title{\Title\thanks{Received
% by the editors on \DoS.
% Accepted for publication on \DoA. 
% Handling Editor: \HE. Corresponding Author: \CA}}

\author{
Logan M.\ Higginbotham\thanks{Department of Mathematical Sciences,
Campbell University, Buies Creek, North Carolina 27506, USA
(lhigginbotham@campbell.edu).} 
% Remember to put \and between any two authors
\and
Chase T.\ Worley\thanks{Department of Mathematics and Computing, Lander University, Greenwood, South Carolina 29649, USA (cworley@lander.edu).}
}

%\pagestyle{myheadings}
%\markboth{\Names}{\Title}

\maketitle

\begin{abstract}
We introduce Hadamard matrices whose entries are quaternionic. We then go on to provide classification of quaternionic Hadamard matrices of circulant core of orders 2 through 5. We also introduce quaternionic Hadamard matrices of Butson type and ways to create quaternionic Hadamard matrices from real and complex Hadamard matrices. Examples are shown that showcase how Hadamard matrices over the quaternions are richer than Hadamard matrices over the complex numbers.
\end{abstract}

%\begin{keywords}
%Hadamard Matrices, Quaternions, Butson type, Circulant Core Matrices, Fourier Matrices.
%\end{keywords}
%\begin{AMS}
%05B20, 15A33, 15A57, 43A32, 47C99.
%\end{AMS}

% Sample article for the Electronic Journal of Linear Algebra

%%%%%%%%%%%%%%%%%%%%%%%%%%%%%%%%%%%%%%%

%%%%%%%%%%%%%%%%%%%%%%%%%%%%%%%%%%%%%%%%%%%%%%%%%%%%%%%%%%%%%
%intro
\section{Introduction}

A Hadamard matrix is a square $n\times n$ matrix whose entries have absolute value 1, and whose rows (and hence columns) are pairwise orthogonal.  In the real case, we have that the entries of the matrix are $\pm 1$, and we may extend this to the complex numbers by having the entries, $h_{ij}$, of the Hadamard matrix, $H$, to have absolute value 1, i.e. $h_{ij} = e^{\theta \imath}$ for some $\theta\in\reals$.  It is easy to show that if $H$ is a real Hadamard matrix of order $n$, then $n=1,2$ or $n$ must be divisible by $4$.  This follows from the pairwise orthogonality of at least three rows. (We know that every pair of rows must differ on exactly half of its entries.)  It is still an open question if there is a Hadamard matrix of order $n=4k$ for every $k\in \naturals$.

Where there are real Hadamard matrices of certain orders, there is a complex Hadamard matrix of every order $n\in\naturals$, namely the complex Fourier matrix where the $rs^\text{th}$ entry is given by $e^{\frac{2\pi \imath (rs)}{n}}$ for $0\leq r,s\leq n-1$.  A complete classification of complex Hadamard matrices is unknown, but it is known for $n\leq 5$. We have up to a notion of equivalence that there is one Hadamard matrix of order $n=1$, one matrix of order $n=2$, one matrix of order $n=3$, infinitely many of order $n=4$, and one of order $n=5$.  It is easy to check for $n=1,2,3,4$.  For $n=5$, one can see the work of Haagerup \cite{Haagerup5by5}.  A partial classification of $n=6$ can be seen by the work of Beauchamp and Nicoara in \cite{NiBe}.  The reader should also see \cite{goodthesis} to find a catalog of the results.  Since we find that there are infinitely many complex Hadamard matrices of order 4, we begin to ask when does a particular Hadamard matrix belong to a family? On the opposite side of this coin, we can also ask when is a particular matrix \textit{not} part of a family of Hadamard matrices? Petrescu showed in \cite{Pet} and it was later confirmed and generalized by Nicoara in \cite{Ni1} that the complex Fourier matrix is not part of a family of complex Hadamard matrices when the order is prime.  A discussion of finding families of Hadamard matrices can be found in a few sources some of which include \cite{goodthesis}, \cite{NiWh}, \cite{Pet}, and \cite{WorleyThesis}.  This list is by far not exhaustive and much research has been done in this area.

After the real and complex case, one might be interested in what happens when the entries are quaternions.  Though much effort has been put into studying the real and complex cases, not much has been done in the quaternionic case.  In \cite{4x4quaternionichadamard}, the authors discuss the classification through order 4.  They find two families of quaternionic Hadamard matrices of order 4.  At the end of their paper, some open problems were stated.  One of these problems dealt with the classification of quaternionic Hadamard matrices of order 5.  In this paper, we discuss at least a partial classification of order 5 Hadamard matrices.  In particular, we find a one-parameter family of quaternionic Hadamard matrices of order 5 with \textit{circulant core}.  We begin the classification with circulant core since Haagerup showed that in the complex case all complex Hadamard matrices of order 5 are equivalent to a circulant core Hadamard matrix \cite{Haagerup5by5}.  However, unlike the complex case, we also give a one-parameter family of quaternionic Hadamard matrices of order 5 where the core is \textit{not} circulant.  In fact, we find matrices that have \text{real} entries which is impossible in the complex case.  This shows that losing the commutativity of the entries in the matrices allows for extremely interesting properties to occur.

At the end of this paper, we also discuss finding quaternionic Fourier matrices based on quaternionic solutions to the polynomial equation $x^2+1=0$ (which has infinitely many solutions over $\quaternions$).  We also discuss quaternionic Hadamard matrices when the entries solve the polynomial equation $x^k-1=0$. These matrices are commonly refered to as Butson type Hadamard matrices as first introduced in \cite{Butson} in the complex setting.

%%%%%%%%%%%%%%%%%%%%%%%%%%%%%%%%%%%%%%%
\section{Basics of Quaternions}
\begin{definition}
A \textbf{quaternion} is a number of the form $q=q_0+q_1\imath+q_2\jmath+q_3\kappa$ where $q_0,...,q_3\in\reals$ and $\imath ,~\jmath ,$ and $\kappa$ satisfy $\imath\jmath\kappa=1$ and $\imath^2=\jmath^2=\kappa^2=-1$. The collection of all quaternions is denoted $\quaternions$. We call $q_0$ the \textbf{real part} of $q$, denoted $\Re(q)$ and $q_1\imath +q_2\jmath +q_3\kappa$ the \textbf{imaginary part} of $q$, denoted $\Im(q)$. A quaternion that has a zero real part is called a \textbf{pure quaternion}.
We also call $q_1$, $q_2$, and $q_3$ the $\imath$, $\jmath$, and $\kappa$ parts of $q$ respectively. The \textbf{conjugate} of $q$, denoted $\overline{q}$ is the quaternion $\overline{q}=q_0-q_1\imath-q_2\jmath-q_3\kappa$ and the \textbf{norm} of $q$, denoted $|q|$ is $|q|=q_0^2+q_1^2+q_2^2+q_3^2$. A quaternion is \textbf{unital} if $|q|=1$ i.e. if $\overline{u}=u^{-1}$, where $u^{-1}$ is the multiplicative inverse of $u$.  
\end{definition}

Multiplication and addition of quaternions is defined in the usual way as complex numbers. However, unlike $\complex$, multiplication is \textbf{not} commutative. It's not even anti-commutative. Take $(2+i)(3-j)$ vs $(3-j)(2+i)$. They differ in their $\kappa$ parts alone. The definitions above satisfy a variety of elementary properties which are listed below. The following list is an abbreviated list from \cite{quaternionmatrixtheory}:

\begin{theorem}
Let $x$, $y$, and $z$ be quaternions. Then:
\begin{description}
    \item{1.)} $x\overline{x}=\overline{x}x$ for all $x\in\quaternions$.
    \item{2.)} $|\cdot|$ is a norm on $\quaternions$ i.e. $|x|=0$ if and only if $x=0$, $|x+y|\leq |x|+|y|$, and $|xy|=|x|~|y|$.
    \item{3.)} One can define an inner product on $\quaternions$ via $x\cdot y=x\overline{y}$
    \item{4.)} $\overline{x+y}=\overline{x}+\overline{y}$ and $\overline{xy}=\overline{y}~\overline{x}$.
    \item{5.)} $(xy)z=x(yz)$.
    \item{6.)} $\overline{x}=x$ if and only if $x\in\reals$.
    \item{7.)} $xq=qx$ for every quaternion $q$ if and only if $x\in\reals$.
    \item{8.)} If $x\neq 0$, then the multiplicative inverse of $x$, $x^{-1}$ is given by $x^{-1}=\frac{\overline{x}}{|x|^2}$. In particular, $|x|=|x^{-1}|$.
    \item{9.)} Every quaternion $q$ can be uniquely expressed as $q=c_1\imath+c_2\jmath$ where $c_1,c_2\in\complex$.
    \item{10.)} For any quaternion $x$, $x^2=|\Re(x)|^2-|\Im(x)|^2+2\Re(x)\Im(x)$.
    \item{11.)} The quaternionic equation $t^2=-1$ has infinitely many solutions.
\end{description}
\end{theorem}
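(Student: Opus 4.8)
The plan is to reduce all eleven items to finite computation in the real basis $\{1,\imath,\jmath,\kappa\}$, leaning on one recurring fact: for $x=q_0+q_1\imath+q_2\jmath+q_3\kappa$, expanding $x\overline{x}$ (item~(1)) collapses the imaginary terms and leaves the real scalar $q_0^2+q_1^2+q_2^2+q_3^2=|x|$, so $x\overline{x}=\overline{x}x$ is real and central. Items~(6) and~(9) are immediate: $\overline{x}=x$ forces $q_1=q_2=q_3=0$, and using that $\imath\jmath\kappa=1$ together with $\kappa^2=-1$ pins down $\kappa=-\imath\jmath$, one regroups $q$ into the claimed form over $\complex$, with uniqueness supplied by the $\reals$-linear independence of $1,\imath,\jmath,\kappa$.

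First I would dispatch the conjugation and norm identities. Item~(4) is checked on the nine basis products (or noted to be the unique anti-automorphism fixing $\reals$ and negating $\imath,\jmath,\kappa$), and the order reversal in $\overline{xy}=\overline{y}\,\overline{x}$ is exactly where noncommutativity must be respected. Granting~(1) and~(4), multiplicativity in item~(2) drops out: $|xy|=(xy)\overline{(xy)}=x(y\overline{y})\overline{x}=|y|\,x\overline{x}=|x|\,|y|$, since $y\overline{y}=|y|$ is a central real scalar. The triangle inequality is just the Euclidean one on $\reals^4$ under $q\mapsto(q_0,q_1,q_2,q_3)$, and the inner product of item~(3) is recovered as the real part of $x\overline{y}$. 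For item~(8), since $x\overline{x}$ is a positive real scalar when $x\neq 0$, dividing $\overline{x}$ by it produces a two-sided inverse. Item~(5) is associativity on basis triples, and for the center in item~(7) it suffices to test commutation against $\imath$ and $\jmath$, which forces the imaginary part to vanish.

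The computational heart is item~(10), and item~(11) is its immediate payoff. Writing $x=\Re(x)+v$ with $v=\Im(x)$ pure, I expand $x^2=\Re(x)^2+2\Re(x)v+v^2$, where the cross term survives precisely because the real scalar $\Re(x)$ is central (item~(7)); a short computation using $\imath^2=\jmath^2=\kappa^2=-1$ and the cancellation of the mixed products gives $v^2=-(q_1^2+q_2^2+q_3^2)$, which is the stated formula. For item~(11) I specialize to a pure quaternion $t=x_1\imath+x_2\jmath+x_3\kappa$, so that $t^2=-(x_1^2+x_2^2+x_3^2)$; hence $t^2=-1$ if and only if $x_1^2+x_2^2+x_3^2=1$, a unit sphere's worth of solutions. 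This is the fact that makes the quaternionic theory of the paper genuinely richer than the complex one.

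I expect no deep obstacle here: every item is finitary. The one persistent subtlety is bookkeeping under noncommutativity---preserving order in item~(4) and in the expansion for~(10), and repeatedly invoking the centrality of real scalars (item~(7)) to move factors such as $|y|$ or $\Re(x)$ past others. A slicker uniform route I would mention as an alternative is the faithful representation of $\quaternions$ inside $M_2(\complex)$ (equivalently $M_4(\reals)$), suitably adapted to the convention $\imath\jmath=-\kappa$ used here: under it associativity~(5) is inherited from matrix multiplication, the norm becomes a determinant so that multiplicativity~(2) is the determinant product rule, and the inverse formula~(8) becomes the adjugate (Cramer's) rule---at the one-time cost of checking that the assignment is an algebra homomorphism.
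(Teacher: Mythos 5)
Your proposal is correct, but it is worth noting that the paper supplies no proof of this theorem at all: the statement is presented as an abbreviated list quoted from Zhang \cite{quaternionmatrixtheory}, so your componentwise verification in the basis $\{1,\imath,\jmath,\kappa\}$ actually fills a gap rather than paralleling an argument. Your computations are sound, and you handle the two genuinely delicate points well: you respect order of factors in $\overline{xy}=\overline{y}\,\overline{x}$ and in the expansion of $x^2$, invoking the centrality of $\reals$ (item~(7)) precisely where a commutation is needed; and, importantly, you adapt to the paper's nonstandard orientation $\imath\jmath\kappa=1$, correctly deducing $\kappa=-\imath\jmath$, which is exactly what makes the form $q=c_1\imath+c_2\jmath$ in item~(9) close up (under the usual convention $\imath\jmath=\kappa$ one would instead write $q=c_1+c_2\jmath$). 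The derivation $v^2=-(q_1^2+q_2^2+q_3^2)$ for pure $v$ and the resulting sphere of solutions to $t^2=-1$ is also the route the paper itself gestures at in the discussion following the theorem. Two caveats, both defects of the statement rather than of your argument. First, the paper defines $|q|=q_0^2+q_1^2+q_2^2+q_3^2$ with no square root, and you silently alternate conventions: your chain $|xy|=(xy)\overline{(xy)}=x(y\overline{y})\overline{x}=|x|\,|y|$ uses the literal (squared) definition, while your triangle inequality and the formula $x^{-1}=\overline{x}/|x|^2$ presuppose the Euclidean norm; under the literal definition the triangle inequality fails (take $x=y=1$), so the Euclidean reading is the right repair, but you should say explicitly which convention is in force. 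Second, you do not address the clause $|x|=|x^{-1}|$ in item~(8), and no proof of it could succeed: since the norm is multiplicative, $|x^{-1}|=|x|^{-1}$, so the printed claim holds only when $|x|=1$ (the unital case, which is all the paper ever uses); your convention-free formula $x^{-1}=\overline{x}/(x\overline{x})$ is the correct general statement. Your closing alternative via the embedding $\quaternions\hookrightarrow M_2(\complex)$, with the norm as a determinant, is a legitimate uniform shortcut for items~(2), (5), and (8), provided one does the one-time check that the sign conventions match $\imath\jmath=-\kappa$.
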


Of particular note is the last item. Although there are infinitely many solutions to this equation, these solutions lie on a sphere. Note that by the previous item if $q^2=-1$, then $\Re(q)=0$ (lest $q^2$ have a non-zero imaginary part) and $|q|=1$. It then follows that $q$ is a solution to $t^2=-1$ if and only if $\Re(q)=0$ and $|q|=1$. If $q=q_1\imath +q_2\jmath+q_3\kappa$, then $q_1^2+q_2^2+q_3^2=1$. We can view $q$ as a point on a unit sphere  centered at $\Re(q)=0$. Thus, all solutions of $t^2=-1$ ``lie" on a unit sphere in $\quaternions$. It's at times useful to view $\Im(q)$ as a point on a sphere centered at $\Re(q)$ in $\quaternions$. In general, pure quaternions can be imagined to exist on a sphere in $\reals^3$ centered at the origin. This will not be the last time we observe a connection between quaternions and spheres!

At this juncture, we're in a position to define quaternionic Hadamard matrices.

\begin{definition}
Let $H$ be an $n\times n$ matrix with entries in $\quaternions$. We say $H$ is a \textbf{quaternionic Hadamard matrix of order n} provided every entry if $H$ is $1$ and any inner product of two rows or two columns is zero.
\end{definition}

Since $\quaternions$ is non-commutative, one can conjugate by a quaternion $u$ i.e. $q\to uqu^{-1}$. To avoid confusion, we the term group conjugate as opposed to conjugate to distinguish this action from the notion of conjugation above. It turns out we can use group conjugation on a quaternionic Hadamard matrix to create another quaternionic Hadamard matrix.

\begin{definition}
To \textbf{group conjugate} $q$ by $u$ is to perform the action $uqu^{-1}$. Two quaternions $q$ and $r$ are \textbf{group conjugal} if one can find a $u\in\quaternions$ so that $q=uru^{-1}$. 
\end{definition}

There are a variety of properties of group conjugation that are summarized below. Citation is given where appropriate:

\begin{theorem}
Let $q$ and $r$ be quaternions in $\quaternions$ and $u$ be a unital quaternion. Then:
\begin{description}
    \item{1.)} $q\sim r$ if and only if $q$ and $r$ are conjugal is an equivalence relation.
    \item{2.)} Group conjugation fixes real numbers.
    \item{3.)} Conjugation preserves norm i.e. $|q|=|uq\overline{u}|$; $q\overline{r}=0$ if and only if $uq\overline{u}\overline{ur\overline{u}}=0$.
    \item{4.)} If $\sim'$ is an equivalence relation defined via $q\sim'r$ if and only if there exists a unital quaternion $u$ so that $q=uru$ and $\sim$ is the equivalence relation given in item $1$, then $\sim$ and $\sim'$ are the same equivalence relation.
    \item{5.)} (From Prop 2.4.18 page 29 of \cite{quaternionalgebras}) The unit quaternions act on the collection of pure quaternions by conjugation. If $q$ is a pure quaternion and $u=u_0+u_1\imath+u_2\jmath+u_3\kappa$, $uq\overline{u}$ is a rotation by angle $2u_0$ about the axis determined by the line through the origin and $(u_1,u_2,u_3)$.
    \item{6.)} (From \cite{quaternionmatrixtheory}) If $q=q_0+q_1\imath+q_2\jmath+q_3\kappa$, then $q$ is group conjugal to $q_0+(\sqrt{q_1^2+q_2^2+q_3^2})\imath$.
    \item{7.)} (From \cite{Brenner}) Quaternions $q$ and $r$ are group conjugal if and only if $\Re(q)=\Re(r)$ and $|\Im(q)|=|\Im(r)|$.
\end{description}
\end{theorem}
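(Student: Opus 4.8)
The plan is to treat the seven items in three groups according to how much work each needs: items~1 through~3 are immediate consequences of the algebra recorded in the preceding theorem, item~4 is the one genuinely delicate point, and items~5 through~7 are quoted from the literature and can be organized so that the geometric picture in item~5 drives items~6 and~7.

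For item~1 I would verify the three axioms directly: reflexivity from $q = 1\cdot q\cdot 1^{-1}$; symmetry by observing that $q = uru^{-1}$ rearranges to $r = u^{-1}q(u^{-1})^{-1}$; and transitivity from associativity (Theorem~1, item~5), since $q = uru^{-1}$ together with $r = vsv^{-1}$ combine to $q = (uv)s(uv)^{-1}$. Item~2 is immediate from the fact (Theorem~1, item~7) that the reals are exactly the quaternions commuting with everything: if $x\in\reals$ then $uxu^{-1} = x\,uu^{-1} = x$. For item~3 I would use multiplicativity of the norm together with $|u| = |\overline u| = 1$ to get $|uq\overline u| = |u|\,|q|\,|\overline u| = |q|$; for the orthogonality statement the key simplification is $\overline{ur\overline u} = u\overline r\,\overline u$ (using $\overline{xy} = \overline y\,\overline x$ and $\overline{\overline u} = u$), so that, using $\overline u u = |u|^2 = 1$,
\[ uq\overline u\cdot\overline{ur\overline u} = uq\overline u\,u\,\overline r\,\overline u = u(q\overline r)\overline u, \]
whose norm equals $|q\overline r|$; hence it vanishes if and only if $q\overline r = 0$.

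The crux is item~4, which compares general conjugacy with the relation $\sim'$ obtained by restricting the conjugating element to be unital. The plan is to show that nothing is lost by this restriction: for any nonzero $u$ the scalar $|u|$ is real and therefore central (Theorem~1, item~7), so $uru^{-1} = (u/|u|)\,r\,(u/|u|)^{-1}$ with $u/|u|$ unital, while every unital conjugation is in particular a conjugation. Because the unital quaternions are closed under products and inverses (they form a group, by multiplicativity of the norm and Theorem~1, item~8), the restricted relation satisfies the same three axioms as in item~1, and the two relations therefore partition $\quaternions$ identically. I expect this item to be the main obstacle, since it requires reading the definition of $\sim'$ carefully and checking that unitality is genuinely preserved at each step (reflexivity, symmetry, and especially transitivity, where the product of two unital conjugators must again be unital) rather than merely assumed.

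Finally, items~5 through~7 are cited, and I would present~5 as the engine for the other two. Item~5, the statement that $u(\cdot)\overline u$ acts on pure quaternions as rotation by angle $2u_0$ about the axis $(u_1,u_2,u_3)$, I would simply quote from \cite{quaternionalgebras}. Granting it, item~6 follows by choosing a rotation that sends the unit vector $\Im(q)/|\Im(q)|$ to $\imath$: since such a rotation fixes the real part and preserves the length of the imaginary part, it carries $q$ to $q_0 + |\Im(q)|\,\imath = q_0 + \sqrt{q_1^2+q_2^2+q_3^2}\,\imath$, exactly the claimed representative. Item~7 then follows from the same picture in both directions: conjugation preserves $\Re$ (it only rotates the imaginary part) and preserves $|\Im|$ (rotations are isometries), giving necessity; and since rotations act transitively on each sphere of fixed radius about the origin, any two imaginary parts of equal length are related by a rotation while the matching real parts are automatic, giving sufficiency (this is also available directly from \cite{Brenner}). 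This arrangement leaves item~4 as the only step demanding genuine care.
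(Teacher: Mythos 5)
Your proposal is correct, and it matches the paper as closely as is possible: the paper states this theorem with no proof at all, presenting it as a summary list with citations for items 5--7, and your verifications (group axioms for item 1, centrality of $\reals$ for item 2, multiplicativity of the norm together with $\overline{ur\overline{u}}=u\overline{r}\,\overline{u}$ for item 3, rescaling the conjugator by its central norm for item 4, and the rotation picture of item 5 driving items 6 and 7) are exactly the standard arguments found in the cited sources. Two small remarks: you rightly read the paper's ``$q=ur u$'' in item 4 as a typo for $q=uru^{-1}$ with $u$ unital (taken literally it would give $1\sim' -1$ via $u=\imath$, contradicting item 2), and since the paper defines $|q|$ as $q_0^2+q_1^2+q_2^2+q_3^2$ with no square root, your normalization should read $u/\sqrt{|u|}$ and your identity $\overline{u}u=|u|^2$ should read $\overline{u}u=|u|$, neither of which affects the argument.
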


There are a variety of consequences from this theorem. Item three tells us that if $H$ is a quaternionic Hadamard matrix, then $uH\overline{u}$ is another quaternionic Hadamard matrix.

Item six of the above theorem tells us that if $H$ is a quaternionic Hadamard matrix with an entry $x$, then we can find a unital $u$ so that $uH\overline{u}$ is a Hadamard matrix with $x$ complex.

It's also worth noting that a consequence of the last item of the above theorem is that every equivalence class $[q]$ for $\sim$ (or $\sim'$) are ``spheres" of radius $|q|$ that have the same real part as $q$ i.e. if $r\in [q]$, then $\Im(r)$ and $\Im(q)$ lie on a sphere of radius $|q|-q_0^2$ so we think of $[q]$ as a sphere in $\quaternions$ centered at $\Re(q)=\Re(r)$. Further, we can think of ``spheres of Hadamard matrices" $uH\overline{u}$ where $u$ is a unital quaternion.

With Hadamard matrices, we often dephase and permute rows and columns. Such actions remain an equivalence relation even if $H$ is a quaternionic Hadamard matrix.

\begin{proposition}
Let $H_1$ and $H_2$ be quaternionic Hadamard matrix. Define $\sim$ to be the $H_1\sim H_2$ if and only if there are permutation matrices $P_1$ and $P_2$ and unitary diagonal matrices $D_1$ and $D_2$ so that $H_1=P_1D_1H_2D_2P_2$. Then $\sim$ is an equivalence relation.
\end{proposition}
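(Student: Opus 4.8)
The plan is to verify the three defining properties of an equivalence relation---reflexivity, symmetry, and transitivity---directly from the definition. Two structural facts will do the work. First, matrix multiplication over $\quaternions$ is associative, inherited from associativity of quaternion multiplication (item 5 of the quaternion properties), so every product of the form $PDHD'P'$ is unambiguous and the reversal rule $(AB)^{-1}=B^{-1}A^{-1}$ holds for invertible $A,B$. Second, and crucially, I will use a commutation lemma: if $P$ is a permutation matrix and $D$ a unitary diagonal matrix, then $DP=PD'$ and $PD=D''P$ for unitary diagonal matrices $D',D''$ obtained by permuting the diagonal entries of $D$. This is where the non-commutativity of $\quaternions$ could in principle cause trouble, but it does not: the entries of $P$ are real and hence central in $\quaternions$ (item 7), so sliding a diagonal past a permutation only relabels the diagonal entries, as one checks entrywise.

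Reflexivity is immediate, taking $P_1,P_2,D_1,D_2$ all equal to the identity so that $H=I\,I\,H\,I\,I$. For symmetry, suppose $H_1=P_1D_1H_2D_2P_2$. A permutation matrix is invertible with permutation-matrix inverse, and a unitary diagonal matrix $D$ is invertible with $D^{-1}$ again unitary diagonal, since its diagonal entries $d_i^{-1}=\overline{d_i}$ are unital (unital means $\overline{u}=u^{-1}$). Solving gives $H_2=D_1^{-1}P_1^{-1}H_1P_2^{-1}D_2^{-1}$, and applying the commutation lemma in the forms $D_1^{-1}P_1^{-1}=P_1^{-1}\tilde D_1$ and $P_2^{-1}D_2^{-1}=\tilde D_2P_2^{-1}$ rewrites this as $H_2=P_1^{-1}\tilde D_1\,H_1\,\tilde D_2P_2^{-1}$, which is exactly of the form witnessing $H_2\sim H_1$.

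For transitivity, suppose in addition $H_2=Q_1E_1H_3E_2Q_2$. Substituting yields $H_1=P_1D_1Q_1E_1\,H_3\,E_2Q_2D_2P_2$. Using the commutation lemma to collect the outer blocks---$D_1Q_1=Q_1D_1'$ on the left and $Q_2D_2=D_2''Q_2$ on the right---gives $H_1=(P_1Q_1)(D_1'E_1)\,H_3\,(E_2D_2'')(Q_2P_2)$. A product of two permutation matrices is again a permutation matrix, and a product of two unitary diagonal matrices is again unitary diagonal, its diagonal entries being products of unital quaternions, hence unital by $|xy|=|x|\,|y|$ (item 2). Thus the expression witnesses $H_1\sim H_3$.

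I expect the only real obstacle to be the commutation lemma: one must confirm that dropping commutativity of the entries does not break the familiar fact that permutation and diagonal matrices commute up to a relabeling of the diagonal. This is settled by the observation that permutation-matrix entries are real and therefore central, so each manipulation above reduces to the commutative bookkeeping used for real and complex Hadamard equivalence.
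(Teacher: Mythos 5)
Your proof is correct and complete; the paper states this proposition without any proof at all, so your write-up simply supplies the routine verification the authors leave implicit. The one genuinely quaternionic subtlety---sliding a unitary diagonal matrix past a permutation matrix at the cost of permuting its diagonal entries, valid because the $0$--$1$ entries of a permutation matrix are real and hence central in $\quaternions$---is exactly the right point to isolate, and your closure observations (inverses and products of unital quaternions are unital, so unitary diagonal and permutation matrices are closed under the needed operations) are likewise sound.
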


\begin{definition}
We say that a matrix $H$ is \textbf{dephased} if its first row and column contains only $1$'s.  The core of a matrix is the submatrix formed by deleting the first row and column of the original matrix.
\end{definition}

There are two equivalence relations at work in quaternionic Hadamard matrices. One is the typical dephasing/permutation relation above while the other is group conjugation. These relations in tandem can yield many quaternionic Hadamard matrices.

\begin{definition}
Two quaternionic Hadamard matrices $H_1$ and $H_2$ are \textbf{equivalent} if they are group conjugal or if they are dephasing/permutation equivalent.
\end{definition}

\begin{remark}
Group conjugation by some $0\ne q\in \quaternions$ takes a dephased matrix to a dephased matrix. This happens since $\reals$ is the center of $\quaternions$. 
\end{remark}

We end this section with a lemma that will be used in our classification of quaternionic Hadamard matrices of order five with circulant core:

\begin{lemma}
Let $q=q_0+q_1\imath+q_2\jmath+q_3\kappa$ and $r=r_0+r_1\imath+r_2\jmath+r_3\kappa$ be quaternions so that $q_0=r_0$ and $|q|=|r|$. Then there exists a unital quaternion $u$ so that $uq\overline{u}$ is complex with non-negative $\imath$ part and $ur\overline{u}$ has no $\kappa$ part.
\end{lemma}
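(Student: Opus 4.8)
The plan is to translate the statement entirely into a question about rotations of $\reals^3$, solve it there, and then pull the answer back to $\quaternions$. I would start from the observation that group conjugation $x \mapsto ux\overline{u}$ by a unital quaternion $u$ fixes real parts (item 2 of the group conjugation theorem) and, by item 5 of that theorem, acts on the three-dimensional space of pure quaternions as a rotation $R_u \in SO(3)$; moreover every rotation arises this way, since $u \mapsto R_u$ is the usual surjection of the unit quaternions onto $SO(3)$. Viewing $\Im(q) = (q_1,q_2,q_3)$ and $\Im(r) = (r_1,r_2,r_3)$ as vectors in $\reals^3$ with axes labelled $\imath,\jmath,\kappa$, conjugating both $q$ and $r$ by the same $u$ applies the same rotation to both imaginary parts while leaving $q_0$ and $r_0$ fixed. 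So it suffices to produce a rotation $R$ with $R\,\Im(q)$ a non-negative multiple of the $\imath$-axis and $R\,\Im(r)$ in the plane $\kappa = 0$; any $u$ with $R_u = R$ then works.

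To construct such an $R$, I would treat the case $\Im(q) \neq 0$ first. Using transitivity of $SO(3)$ on the unit sphere, I would pick $R_1$ carrying $\Im(q)/|\Im(q)|$ to the positive $\imath$-axis, so that $R_1\,\Im(q) = |\Im(q)|\,\hat\imath$ has positive $\imath$-coordinate. The leftover freedom is exactly the rotations about the $\imath$-axis, which fix $R_1\,\Im(q)$ and act on the $\jmath$-$\kappa$ plane; writing $R_1\,\Im(r) = (a,b,c)$, I would choose a rotation $R_2$ about the $\imath$-axis by an angle $\theta$ solving $b\sin\theta + c\cos\theta = 0$, which is always solvable, to kill the $\kappa$-coordinate. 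Then $R = R_2 R_1$ has both properties. If instead $\Im(q) = 0$, then $q = q_0$ is real and $uq\overline{u} = q_0$ is automatically complex with $\imath$-part $0 \ge 0$ for every $u$, so I would only need a rotation taking $\Im(r)$ into the $\imath$-$\jmath$ plane, which clearly exists.

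Lifting $R$ to a unital $u$ with $R_u = R$ finishes the argument. The step I expect to need the most care is the simultaneity: the two requirements must be met by a single rotation, and the point is that aligning $\Im(q)$ with the $\imath$-axis consumes the transitive action on the sphere yet leaves precisely the one rotational degree of freedom about that axis, which is exactly enough to drop $\Im(r)$ into the chosen plane. I would also note that the hypotheses $q_0 = r_0$ and $|q| = |r|$ (equivalently, that $q$ and $r$ are group conjugal, by item 7) are not actually needed for the construction; they merely record the setting in which the lemma is applied.
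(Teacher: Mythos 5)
Your proof is correct, but it decomposes the rotation differently from the paper's proof. The paper also reduces the problem to rotations of $\reals^3$ via the conjugation action of unit quaternions on pure quaternions, but it performs the two steps in the opposite order: it first drops \emph{both} imaginary parts into the $\imath$-$\jmath$ plane simultaneously, by taking a great circle through $\Im(q)$ and $\Im(r)$ on their common sphere (this is exactly where the hypotheses $q_0=r_0$ and $|q|=|r|$ enter), observing that this great circle meets the equator in two antipodal points, and rotating about the line through those points by the dihedral angle between the two planes; only afterwards does it rotate about the $\kappa$-axis, which leaves the $\imath$-$\jmath$ plane invariant, to place $\Im(q)$ on the non-negative $\imath$-axis. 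You instead spend the transitivity of $SO(3)$ on aligning $\Im(q)$ first, and then use the stabilizer of the $\imath$-axis to solve $b\sin\theta + c\cos\theta = 0$ for the $\kappa$-coordinate of $\Im(r)$. Your order has genuine advantages: the simultaneity concern is resolved structurally, since the stabilizer of the $\imath$-axis is precisely the residual degree of freedom; the great-circle construction, with its degenerate configurations (equal or antipodal imaginary parts, a vanishing imaginary part), is replaced by a one-variable trigonometric equation that is always solvable; and, as you correctly note, the hypotheses $q_0=r_0$ and $|q|=|r|$ become superfluous --- the paper needs them only to put $\Im(q)$ and $\Im(r)$ on a single sphere for the great-circle step, whereas your argument proves the stronger statement for arbitrary $q,r\in\quaternions$. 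What the paper's version buys is the explicit geometric picture of points moving on a common sphere, which the authors emphasize throughout; both versions rely in the end on the same standard surjection of the unit quaternions onto $SO(3)$.
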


\begin{proof}
This proof will proceed in two steps. We will first rotate by $u_1$ so that $u_1q\overline{u}_1$ and $u_1r\overline{u}_1$ have no $\kappa$ part. If both $q$ and $r$ have no $\kappa$ parts, then there is nothing to be done at this step. So assume at least one has a non-zero $\kappa$ part. Since $q_0=r_0$ and $|q|=|r|$, one can imagine $\Im(q)$ and $\Im(r)$ as points in $\reals^3$ with $\imath$, $\jmath$, and $\kappa$ as your typical $x$, $y$, and $z$ axes respectively. Further, these points lie on a sphere of radius $|q|=q_0^2$. Define the equator to be the great circle of this sphere that lies wholly in the $\imath-\jmath$ plane. $\Im(q)$ and $\Im(r)$ has at least one great circle that goes through these points and will intersect the equator in at exactly two antipodal points of the sphere as this great circle will be distinct from the equator.

The line through these antipodal points will be the axis that $u_1$ will rotate along when used as a conjugator; the angle to rotate by will be the angle between the $\imath-\jmath$ plane and the plane that cuts through the great circle determined by the points. Since conjugation by a unital quaternion determines a rotation by an axis by some angular measure, one can find a $u_1$ unital so that $u_1q\overline{u}_1$ and $u_1r\overline{u}_1$ have no $\kappa$ part.

If $q$ has no $\imath$ part, we may skip this step. So assume that $q$ has a non-zero $\imath$ part. We may now conjugate by a unital $u_2$ that will rotate along the $\kappa$ axis by some angular amount so that $\Im(u_2u_1q\overline{u}_1\overline{u}_2)$ lies wholly on the positive $\imath$ axis. This rotation will leave the $\imath-\jmath$ plane invariant, so $u_2u_1q\overline{u}_1\overline{u}_2$ will be complex with positive $\imath$ part while $u_2u_1r\overline{u}_1\overline{u}_2$ will still have no $\kappa$ part. Letting $u=u_2u_1$, the proof is finished. 
\end{proof}

%%%%%%%%%%%%%%%%%%%%%%%%%%%%%%%%%%%%%%%%%%
\section{Quaternionic Hadamard Matrices of small order}

We will now make progress in classifying quaternionic Hadamard matrices of small order.  The goal in this section is classify up to order 4 and classify order 5 with circulant core.  Order 4 is classified in \cite{4x4quaternionichadamard}.  For sake of completeness, we will discuss in detail construction of orders 2 and 3.  
%%%%%%%%%%%%%%%%%%%%%%%%%
    \subsection{Order 2}
    
    Let $H$ be a quaternionic Hadamard matrix of order 2.  Then it follows that $H$ must be equivalent (by row/column permutations and row/column multiplications by unit quaternions) to the Fourier matrix, $$\begin{bmatrix} 1&1\\1&-1\end{bmatrix}.$$
    
    This can be shown equivalences $$\begin{bmatrix}
        a & b \\ c & d
    \end{bmatrix} \sim \begin{bmatrix}
        1 & 1\\ \frac{c}{a} & \frac{d}{a}
    \end{bmatrix} \sim \begin{bmatrix}
        1 & 1\\ 1 & \frac{da}{bc}
    \end{bmatrix}. $$  We want our matrices to be Hadamard.  This forces $\frac{ad}{bc} = -1$.  
    
    Since our matrix must be equivalent to a real-valued matrix, group conjugating by $0\ne q\in\quaternions$ yields the same matrix.

%%%%%%%%%%%%%%%%%    
    \subsection{Order 3}

Recall that we may assume that every Hadamard matrix is equivalent to a dephased Hadamard matrix.
    
    Let $H$ be a $3\times 3$ \textit{dephased} Hadamard matrix.  Hence, $H$ has the form $$H= \begin{bmatrix} 1 & 1 & 1\\ 1 & a & b \\ 1 & c & d \end{bmatrix}.$$ From the orthogonality of the first and second rows and the first and second columns that 
\begin{align*}
    1+a+b &= 0\\
    1+a+c &= 0
\end{align*} which implies that $$b=-(1+a)=c.$$  Thus, $$ H= \begin{bmatrix} 1 & 1 & 1\\ 1 & a & b \\ 1 & b & d \end{bmatrix}.$$  Again, using the orthogonality of the rows and columns, it is easy to show that $a=d$.  Therefore, we have the following:
\begin{theorem}
Every $3\times 3$ Hadamard matrix is equivalent to a \textit{circulant core} Hadamard matrix.
\end{theorem}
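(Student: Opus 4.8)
The plan is to continue directly from the dephased form already derived and to pin down the remaining core entry using one more orthogonality relation. Starting from an arbitrary $3\times 3$ quaternionic Hadamard matrix, the remark preceding this theorem lets me assume it is dephased, so it has the shape $H = \begin{bmatrix} 1 & 1 & 1 \\ 1 & a & b \\ 1 & c & d\end{bmatrix}$. The excerpt has already used orthogonality of the first two rows and of the first two columns to force $b = c = -(1+a)$, so the only thing left to establish is the single identity $a = d$; once that holds the core is $\begin{bmatrix} a & b \\ b & a\end{bmatrix}$, which is exactly the form of a $2\times 2$ circulant matrix, and the theorem follows.

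To get $a = d$ I would invoke the orthogonality of the first and third rows under the inner product $x\cdot y = x\overline{y}$ from item $3$ of the quaternion properties. Because the first row is all ones, the inner product of row one with row three collapses to $1 + \overline{b} + \overline{d}$, and setting this to zero gives $b + d = -1$ after conjugating (legitimate here since the constant $-1$ is real). Substituting $b = -(1+a)$ then yields $d = -(1+b) = -(1 - 1 - a) = a$, which is the desired relation. Symmetrically, orthogonality of the first and third columns would give the same conclusion, providing a useful consistency check.

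The step I expect to require the most care is not difficult so much as delicate: since quaternion multiplication is noncommutative, I have to be deliberate about the order of factors and the placement of conjugates in each inner product. The saving grace is that every relation I need is obtained by pairing a row or column against the all-ones first row or column, so each inner product reduces to a sum of single entries (or their conjugates) set equal to the real number $-1$; noncommutativity never actually intervenes and conjugation can be stripped off cleanly. I would therefore present the argument as three short orthogonality computations---rows $1$--$2$, columns $1$--$2$, and rows $1$--$3$---and then simply read off that the $2\times 2$ core is circulant, noting that I never need the remaining (rows $2$--$3$) relation or the unital conditions, which only constrain the actual value of $a$ rather than the structural claim.
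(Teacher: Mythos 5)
Your proposal is correct and follows essentially the same route as the paper: dephase, use orthogonality of rows $1$--$2$ and columns $1$--$2$ to get $b=c=-(1+a)$, then one more orthogonality relation against the all-ones row to get $a=d$. Your rows $1$--$3$ computation ($1+\overline{b}+\overline{d}=0$, hence $d=-(1+b)=a$) is exactly the step the paper dismisses as ``easy to show,'' and your observation that pairing against the all-ones row keeps noncommutativity out of the argument is a valid justification of why the complex-case calculation carries over verbatim.
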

\begin{proof}
    The proof follows from the previous discussion.
\end{proof}
Notice that the result is true in both the complex and quaternionic case.  There are no order 3 real Hadamard matrices as $3\not\equiv 0\mod 4$. 

For the rest of this section, we will assume that $$H= \begin{bmatrix} 1 & 1 & 1\\ 1 & a & b \\ 1 & b & a \end{bmatrix}$$ where $a,b\in\quaternions$, $|a|=|b|=1$, and $b=-(1+a)$.

\begin{lemma}For $a,b\in\quaternions$, $|a|=|b|=1$,
$b=-(1+a)$ implies that $$a\overline{b}=b=\overline{b}a.$$  Moreover, $\Re(b) = -1-\Re(a)$ and $\Im(b) = -\Im(a)$.
\end{lemma}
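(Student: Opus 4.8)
The plan is to dispose of the two ``moreover'' formulas first, as they are immediate, and then to derive the multiplicative identities from the unital condition on $a$ together with the defining relation $b = -(1+a)$.

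First I would handle $\Re(b) = -1 - \Re(a)$ and $\Im(b) = -\Im(a)$. Writing $b = -(1+a) = -1 - a$ and using that $\Re$ and $\Im$ are real-linear, the real part of $-1-a$ is $-1 - \Re(a)$ (the scalar $-1$ being entirely real) while its imaginary part is $-\Im(a)$ (the scalar $-1$ contributing nothing imaginary). This settles both formulas with no computation.

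For the central claim $a\overline{b} = b = \overline{b}a$, the key input is that $a$ is unital. By the definition of unital, $|a| = 1$ gives $\overline{a} = a^{-1}$, hence $a\overline{a} = \overline{a}a = 1$. Next I would conjugate the defining relation: by item 4 of the elementary-properties theorem, together with the fact that $-1$ is real so $\overline{-1} = -1$ (item 6), we obtain $\overline{b} = \overline{-1 - a} = -1 - \overline{a}$. Substituting,
\[
a\overline{b} = a(-1 - \overline{a}) = -a - a\overline{a} = -a - 1 = -(1+a) = b,
\]
and symmetrically
\[
\overline{b}a = (-1 - \overline{a})a = -a - \overline{a}a = -a - 1 = b.
\]

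The only point demanding care is that, because multiplication in $\quaternions$ is non-commutative, the two identities $a\overline{b} = b$ and $\overline{b}a = b$ are a priori distinct and must be verified separately; the reason they nonetheless coincide is precisely that $a\overline{a} = \overline{a}a$ for a unital quaternion (item 1 of the elementary-properties theorem). Beyond this observation there is no genuine obstacle: once the unital hypothesis is translated into $a\overline{a} = \overline{a}a = 1$, each identity collapses in a single line. It is worth remarking that the hypothesis $|b| = 1$ is not actually needed for the conclusion — it is consistent with the rest of the data (one checks it forces $\Re(a) = -\tfrac12$), but all three assertions follow from $|a| = 1$ and $b = -(1+a)$ alone.
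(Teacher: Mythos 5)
Your proof is correct and takes essentially the same route as the paper, whose proof is simply the remark that the result follows from direct calculation: your computation ($\overline{b}=-1-\overline{a}$, then $a\overline{b}=-a-a\overline{a}=-(1+a)=b$ and $\overline{b}a=-a-\overline{a}a=b$, using $a\overline{a}=\overline{a}a=|a|=1$) is exactly that calculation spelled out. Your side observation that the hypothesis $|b|=1$ is not needed for the conclusions, and instead forces $\Re(a)=-\tfrac12$, is also accurate and consistent with the paper's subsequent derivation.
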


\begin{proof}
    Result follows from direct calculations.
\end{proof}

 Due to the orthogonality of the second and third rows, we have that $$1+a\overline{b}+b\overline{a}=0.$$
 Since $b=-1-a$, $\overline{b}=-1-\overline{a}$. Plugging this in for $b$ and $\overline{b}$ we have 
 $$1+a(-1-\overline{a})+(-1-a)\overline{a}=0.$$
 Distributing and using $a\overline{a}=1$, we have $a+\overline{a}=-1$. 
Since $\Re(a)=\frac{1}{2}(a+\overline{a})$ we have that $\Re(a)=-\frac{1}{2}$. 
 Since $b=-1-a$, $\Re(b)=-1-\Re(a)$ so $\Re(b)=-\frac{1}{2}$, too.
Hence, for $a_2, a_3,a_4\in\reals$, we have that $$a = -\frac12 + a_2\imath+a_3\jmath+a_4\kappa$$ where $$ \frac14+a_2^2+a_3^2+a_4^2=1.$$  This is the equation of a sphere of radius $\frac{\sqrt{3}}2$ which we can parameterize as 
\begin{align*}
    a_2 &= \frac{\sqrt{3}}2 \cos(\theta)\sin(\phi)\\
    a_3 &= \frac{\sqrt{3}}2 \sin(\theta)\sin(\phi)\\
    a_4 &= \frac{\sqrt{3}}2 \cos(\phi)
\end{align*}
for $\theta, \phi\in \reals$.

This tells us that $$a = a(\theta, \phi) = -\frac12 + \frac{\sqrt{3}}2 \cos(\theta)\sin(\phi)\imath + \frac{\sqrt{3}}2 \sin(\theta)\sin(\phi)\jmath + \frac{\sqrt{3}}2 \cos(\phi)\kappa$$ and $$b=b(\theta,\phi) = -\frac12 - \frac{\sqrt{3}}2 \cos(\theta)\sin(\phi)\imath - \frac{\sqrt{3}}2 \sin(\theta)\sin(\phi)\jmath - \frac{\sqrt{3}}2 \cos(\phi)\kappa = \overline{a}(\theta, \phi).$$

\begin{theorem}
All $3\times 3$ quaternionic Hadamard matrices belong to the family $$H(\theta, \phi) = \begin{bmatrix} 1 & 1 & 1 \\ 
1 & a(\theta, \phi)&\overline{a}(\theta,\phi) \\
1 & \overline{a}(\theta, \phi) &a(\theta,\phi) \end{bmatrix}.$$
\end{theorem}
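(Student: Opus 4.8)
The plan is to read the statement as the assertion that \emph{every} $3\times 3$ quaternionic Hadamard matrix is equivalent to some $H(\theta,\phi)$, and to assemble the reduction already carried out in this subsection into a single argument. Since the genuinely new content — the parameterization of the admissible cores — has essentially been established in the preceding discussion, the proof amounts to collecting those facts in order and then confirming that nothing has been lost, i.e. that each $H(\theta,\phi)$ really is Hadamard so that the object on the right-hand side is a bona fide family.

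First I would invoke the earlier theorem that every $3\times 3$ quaternionic Hadamard matrix is equivalent to a dephased, circulant-core matrix $\begin{bmatrix}1&1&1\\1&a&b\\1&b&a\end{bmatrix}$ with $|a|=|b|=1$ and $b=-(1+a)$. Next, the orthogonality of the second and third rows, $1+a\overline{b}+b\overline{a}=0$, together with $\overline{b}=-1-\overline{a}$ and $a\overline{a}=1$, forces $a+\overline{a}=-1$, i.e. $\Re(a)=-\tfrac12$; by the preceding Lemma this yields $\Re(b)=-\tfrac12$ and $\Im(b)=-\Im(a)$, so $b=\overline{a}$. The unit-norm condition then reads $\tfrac14+a_2^2+a_3^2+a_4^2=1$, placing $\Im(a)$ on the sphere of radius $\tfrac{\sqrt3}{2}$, which the spherical coordinates $(\theta,\phi)$ cover surjectively. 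Hence the core is exactly $\begin{bmatrix}a(\theta,\phi)&\overline{a}(\theta,\phi)\\\overline{a}(\theta,\phi)&a(\theta,\phi)\end{bmatrix}$ for some $(\theta,\phi)$, establishing membership.

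The remaining thing to check — and the only place where non-commutativity could in principle cause trouble — is that each $H(\theta,\phi)$ satisfies \emph{all} the orthogonality relations. The first-row pairings reduce to $1+a+\overline{a}=1+2\Re(a)=0$, which holds. The last relation is $1+a^2+\overline{a}^2$, and here the saving fact is item (1) of the basic quaternion properties, $a\overline{a}=\overline{a}a$: since $a$ commutes with $\overline{a}$ we may write $a^2+\overline{a}^2=(a+\overline{a})^2-2a\overline{a}=(-1)^2-2(1)=-1$, so this inner product also vanishes. I expect this commuting-conjugate computation to be the main (if modest) obstacle, as it is the one step where one must resist manipulating the quaternionic products as though they commuted in general; the surjectivity of the $(\theta,\phi)$-parameterization and the dephasing reduction are routine. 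I would close by noting that the theorem asserts only coverage of the family, not that distinct $(\theta,\phi)$ give inequivalent matrices.
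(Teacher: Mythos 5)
Your proposal is correct and takes essentially the same route as the paper, whose proof is exactly the preceding discussion you assemble: dephase to the circulant core with $b=-(1+a)$, use the row-$2$/row-$3$ orthogonality $1+a\overline{b}+b\overline{a}=0$ to force $a+\overline{a}=-1$ and hence $b=\overline{a}$, then parameterize the resulting sphere of radius $\tfrac{\sqrt{3}}{2}$ in spherical coordinates. Your additional sufficiency check that each $H(\theta,\phi)$ is in fact Hadamard, via $a\overline{a}=\overline{a}a$ so that $1+a^2+\overline{a}^2=1+(a+\overline{a})^2-2a\overline{a}=0$, is a sound completeness step the paper leaves implicit rather than a genuinely different argument.
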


\begin{proof}The proof follows from the above discussion.
\end{proof}

Notice that this family has the same form as the complex Hadamard matrix of order 3.  This makes sense since a complex Hadamard matrix is also a quaternionic Hadamard matrix.  What we have shown is that there are no other families of quaternionic Hadamard matrices of order three.  In fact, we have the following note:

\begin{remark}\label{equivFam}
If a family of quaternionic Hadamard matrices contain a complex Hadamard matrix, then every matrix in the family must be equivalent to a matrix that has the same form as in the complex case. 
\end{remark}

\begin{example}
Notice that $$H\Big(0, \frac\pi2\Big) = \begin{bmatrix} 1&1&1\\ 1 & -\frac12+\frac{\sqrt3}{2}\imath & -\frac12 - \frac{\sqrt3}2\imath\\ 1 & -\frac12 - \frac{\sqrt3}2\imath & -\frac12+\frac{\sqrt3}{2}\imath \end{bmatrix}$$ is the (complex-valued) Fourier matrix of order 3.
\end{example}
This example is particularly interesting since the Fourier matrix of prime orders are known not to belong to a parametric family in the complex setting \cite{Ni1},\cite{Pet}.  Once we move to the quaternionic setting, we immediately find a family containing the complex Fourier matrix (for at least the $p=3$ case) showing that it is no longer isolated. In fact, we have one ``sphere" of matrices. With this idea in mind, we could have classified the $3\times 3$ quaternionic Hadamard matrices more quickly. Let's reconsider the reduction of $3\times 3$ to circulant core:

$$\begin{bmatrix} 1 & 1 & 1\\ 1 & a & b \\ 1 & b & a \end{bmatrix}$$ where $a,b\in\quaternions$, $|a|=|b|=1$, and $b=-(1+a)$

Without loss of generality, we can group conjugate (traverse along a ``sphere" of Hadamard matrices) so that $a\in\complex$. We would still have $b=-(1+a)$ and since $a$ is complex, we now have $b$ is complex too. It's known that the only complex Hadamard matrix of order $3$ is the Fourier matrix, so we have that the only $3\times 3$ matrix up to equivalence (both in the sense of dephasing and row/comlumn permutation and group conjugation) is the $3\times 3$ Fourier matrix or its conjugate. We ultimately have only one matrix sphere of solutions whose representative we can choose to be the complex $3\times 3$ Fourier matrix or its conjugate. Group conjugation is a powerful tool that will be utilized in the higher orders in this paper.
  
%%%%%%%%%%%%%%%%%%%%%%%%%%%%%  
    \subsection{Order 4}
    
    In \cite{4x4quaternionichadamard}, a complete derivation of the order 4 is discussed.  There are two families of quaternionic Hadamard matrices of order 4. One of the families discussed is the following example.
    
    \begin{example}
        Consider the generic family from \cite{4x4quaternionichadamard}. For $a=a_1+a_2\imath+a_3\jmath$ with $\|a\|=1$ and $a\ne -1$, $x\in span\{\imath, \jmath\}$ with $\|x\|=1$.  For $\hat{a} = a_1+a_2\imath$ and $$b=\left(\frac{1+\hat{a}}{|1+\hat a|}\imath\right)^2, c = \left(x\frac{1+a}{|1+a|}\right)^2, d =\left(x\frac{1+\hat{a}}{|1+\hat a|}\imath\right)^2,  $$ the matrix $$\begin{bmatrix}1&1&1&1 \\ 1&a&b&-1-a-b\\ 1&c&d&-1-c-d\\ 1 & -1-a-c & -1-b-d & 1+a+b+c+d \end{bmatrix} $$ is an order 4 quaternionic Hadamard matrix.
    \end{example}
    
    No one has classified all quaternionic Hadamard matrices of order four with circulant core. We will do so here:
    
    \begin{theorem}
Any quaternionic Hadamard matrix of order 4 with circulant core is equivalent to $F_2\otimes F_2$.
\end{theorem}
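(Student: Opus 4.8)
The plan is to use the circulant structure to reduce $H$ to three unknown unit quaternions, extract the orthogonality relations, and then exploit group conjugation to collapse everything to the real case. First I would dephase and write
$$H = \begin{bmatrix} 1 & 1 & 1 & 1 \\ 1 & a & b & c \\ 1 & c & a & b \\ 1 & b & c & a \end{bmatrix},\qquad |a|=|b|=|c|=1 .$$
Orthogonality of the first two rows (or columns) gives (i) $a+b+c=-1$, while orthogonality of two adjacent core rows gives (ii) $1+a\overline{c}+b\overline{a}+c\overline{b}=0$; the remaining inner products reproduce only the conjugate of (ii), so (i) and (ii) are the only constraints. Writing $\vec A=\Im(a)$, $\vec B=\Im(b)$, $\vec C=\Im(c)$ and separating real and imaginary parts, a short computation (using $\Re(a)+\Re(b)+\Re(c)=-1$, $\vec A+\vec B+\vec C=0$, and the unit-norm conditions) shows that the scalar part of (ii) is automatically satisfied. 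Thus the entire content of (ii) lies in its vector part.

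The key step is to analyze this vector part. Eliminating $\vec C=-\vec A-\vec B$ and using (i), together with the identity $\vec A\times\vec B+\vec B\times\vec C+\vec C\times\vec A=3\,\vec A\times\vec B$ that holds after the substitution, it simplifies to
$$(-1-3\Re(b))\,\vec A + (1+3\Re(a))\,\vec B + 3\,\vec A\times\vec B = 0 .$$
Now $\vec A\times\vec B$ is orthogonal to the plane spanned by $\vec A$ and $\vec B$, whereas the first two terms lie in that plane; hence the cross-product term must vanish on its own, forcing $\vec A\times\vec B=0$. Therefore $\vec A$, $\vec B$, and $\vec C=-\vec A-\vec B$ are collinear: all three imaginary parts lie along a single line through the origin in $\reals^3$. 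By item $3$ of the group-conjugation theorem, conjugating by a unital $u$ sends $H$ to another quaternionic Hadamard matrix, and since this acts entrywise it preserves both the dephasing and the circulant core. Choosing $u$ so that the induced rotation carries this common line to the $\imath$-axis makes $a$, $b$, and $c$ simultaneously complex.

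It then remains to finish in the complex setting. Writing $a=e^{\imath\theta}$, $b=e^{\imath\phi}$, $c=e^{\imath\psi}$, the vector part of (ii) becomes $\sin(\phi-\theta)+\sin(\theta-\psi)+\sin(\psi-\phi)=0$, which factors as
$$4\,\sin\tfrac{\phi-\theta}{2}\,\sin\tfrac{\psi-\phi}{2}\,\sin\tfrac{\psi-\theta}{2}=0 .$$
Hence two of the three angles coincide, i.e.\ two of $a,b,c$ are equal. Feeding this back into (i): if (say) $b=c$, then $a+2b=-1$ with $|a|=|b|=1$ forces $\Re(b)=-1$, so $b=c=-1$ and $a=1$. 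Thus $\{a,b,c\}$ is a cyclic arrangement of $(1,-1,-1)$, so $H$ is a real Hadamard matrix of order $4$, which is permutation-equivalent to $F_2\otimes F_2$. I expect the main obstacle to be the collinearity step: recognizing the cross-product structure in the vector part of (ii) and arguing that it forces $\vec A,\vec B,\vec C$ onto a common axis (so that a single group conjugation complexifies every entry) is the crux, after which the problem reduces to the comparatively routine complex computation.
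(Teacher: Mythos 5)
Your proposal is correct, and it takes a genuinely different route from the paper. The paper's proof is computational: it group conjugates only so that $a\in\complex$, expands $a,b,c$ into ten real coordinates, has Mathematica compute a reduced Gr\"obner basis of the resulting polynomial system, runs a case analysis on equations like $b_1b_2+b_2c_1=0$ to conclude that all entries can be made simultaneously complex, and then invokes Craigen's classification \cite{4x4complexhadamard} of complex circulant-core Hadamard matrices of order $4$. You instead work coordinate-free: splitting the adjacent-row relation (ii) into scalar and vector parts, observing that the scalar part is automatic from (i) and the unit-norm conditions (this checks out: squaring $\Re(a)+\Re(b)+\Re(c)=-1$ and $\vec A+\vec B+\vec C=0$ and adding gives exactly the real part of (ii)), and reducing the vector part to $(-1-3\Re(b))\vec A+(1+3\Re(a))\vec B+3\vec A\times\vec B=0$, which I have verified, including the identity $\vec A\times\vec C+\vec B\times\vec A+\vec C\times\vec B=-3\vec A\times\vec B$ after eliminating $\vec C$. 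The orthogonal-decomposition argument forcing $\vec A\times\vec B=0$ is sound (if $\vec A,\vec B$ were independent the cross term would have a nonzero component off their span; if dependent it vanishes anyway, and degenerate cases with $\vec A=0$ or $\vec B=0$ are covered trivially), and collinearity of all three imaginary parts lets a \emph{single} group conjugation complexify every entry at once---a structurally stronger statement than the paper's normalization of $a$ alone, and it explains \emph{why} the Gr\"obner basis collapses to the complex case. Your trigonometric finish via $4\sin\tfrac{\phi-\theta}{2}\sin\tfrac{\psi-\phi}{2}\sin\tfrac{\psi-\theta}{2}=0$ then derives the real solution $(a,b,c)$ a cyclic arrangement of $(1,-1,-1)$ directly, making the proof self-contained where the paper cites the complex classification. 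What each approach buys: yours is hand-checkable, computer-free, and conceptually transparent; the paper's Gr\"obner machinery is less illuminating here but is the method that scales to the order-$5$ classification in the same section, where cross-product bookkeeping would be far messier. One small inaccuracy worth fixing: the remaining inner products do not all literally ``reproduce the conjugate of (ii)''---the column relations have the form $1+\overline{a}b+\overline{c}a+\overline{b}c=0$, which over a noncommutative ring is a different expression; they are nonetheless redundant because $HH^*=4I$ implies $H^*H=4I$ for matrices over the division ring $\quaternions$, so a one-clause remark to that effect would close the gap (the paper itself silently uses only the two row equations).
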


\begin{proof}
    Using the dephased matrix, you get the equations 
    $$\begin{cases}
    a+b+c+1=0\\
    1+a\overline{c}+b\overline{a}+c\overline{b}=0
    \end{cases}$$
    
    Without loss of generality, we may assume that $a=a_0+a_1\imath$ and let $b=b_0+b_1\imath+b_2\jmath+b_3\kappa$ and $c=c_0+c_1\imath+c_2\jmath+c_3\kappa$ if $a\not \in \complex$ we may group conjugate the matrix by an appropriate $0\ne q\in\quaternions$ to have $qaq^{-1}\in\complex$. Due to the first equation, we know that $b+c$ is complex so equating components we get $c_2=-b_2$ and $c_3=-b_3$. Using the first and second equations and equating components, we find the following system of polynomial equations over the real numbers:
    $$\begin{cases}
    a_0+b_0+c_0+1=0\\
    a_1+b_1+c_1=0\\
    a_0c_0+a_1c_1+a_0b_0+a_1b_1+c_0b_0+c_1b_1-b_2^2-b_3^2+1=0\\
    -a_0c_1+a_1c_0-b_0a_1+b_1a_0-b_1c_0+b_0c_1=0\\
    2a_0b_2-2a_1b_3-b_0b_2-b_2c_0+c_1b_3+b_1b_3=0\\
    2a_0b_3+2a_1b_2-b_3c_0-c_1b_2-b_2b_1-b_3b_0=0
    \end{cases}$$
    One can compute the reduced Groebner basis in reverse lexicographic monomial ordering to obtain the equivalent system
    $$\begin{cases}
     9b_1^2c_0^2 + 6b_1^2c_0 + 9b_1^2c_1^2 + b_1^2 + 9b_1c_0^2c_1 + 6b_1c_0c_1 + 9b_1c_1^3 + b_1c_1 + 9b_2^2c_1^2 + 9b_3^2c_1^2 \\ \hspace{2.5in}
      + 9c_0^2c_1^2 + 6c_0c_1^2 + 9c_1^4 + 7c_1^2=0\\
     2b_1 + 7c_1 + 3b_0b_1 + 9b_1c_0 + 6c_0c_1 + 9b_1c_0^2 + 9b_1c_1^2 + 9b_1^2c_1 + 9b_2^2c_1 + 9b_3^2c_1 \\ \hspace{2.5in} + 9c_0^2c_1 + 9c_1^3 + 9b_0b_1c_0=0\\
     9b_2^3 + 9b_2b_3^2 + 9b_2c_0^2 + 6b_2c_0 + 9b_2c_1^2 + 7b_2=0\\
     9b_2^2b_3 + 9b_3^3 + 9b_3c_0^2 + 6b_3c_0 + 9b_3c_1^2 + 7b_3=0\\
     b_0^2 + b_0c_0 + b_0 + b_1^2 + b_1c_1 + b_2^2 + b_3^2 + c_0^2 + c_0 + c_1^2 + 1=0\\
     2b_2 + 3b_0b_2 + 3b_2c_0=0\\
     2b_3 + 3b_0b_3 + 3b_3c_0=0\\
     c_1 - b_1 + 3b_0c_1 - 3b_1c_0=0\\
     b_1b_2 + b_2c_1=0\\
     b_1b_3 + b_3c_1=0\\
     a_0 + b_0 + c_0 + 1=0\\
     a_1 + b_1 + c_1=0

    \end{cases}$$

    Using the 9th equation, we have that $c_1=-b_1$ or $b_2=0$. If $c_1=-b_1$, then $c=c_0-b_1\imath-b_2\jmath-b_3\kappa$ so $b+c$ is real. Since $a+b+c+1=0$, this will imply that $a$ is real. We may then group conjugate to make $b$ complex (while $a$ remains real); using $a+b+c+1=0$, we'd have that $c$ is complex too hence we would have that the matrix is $F_2\otimes F_2$, where $F_2$ is the $2\times 2$ Fourier matrix. This is because it's known that the only order 4 complex Hadamard matrix with circulant core (up to equivalence) is $F_2\otimes F_2$ (see \cite{4x4complexhadamard}). 
    
    If $b_2=0$ and $c_1\neq-b_1$, then by the 10th equation we have that $b_3=0$ too hence $b$ and $c$ are complex numbers and by similar reasoning as the previous case we would have that $H$ is equivalent to $F_2\otimes F_2$.
    \end{proof}
    
%%%%%%%%%%%%%%%%%%%%%%%%%%%%%%%
    \subsection{Order 5}
    Next we begin to classify all quaternionic Hadamard matrices of order 5.  We begin by assuming that we have a family of quaternionic Hadamard matrices.  See Note \ref{equivFam}.  We know that there is only ONE complex Hadamard matrix of order 5, namely the Fourier matrix which is equivalent to a circulant core Hadamard matrix \cite{Haagerup5by5}.  If we find a family of quaternionic Hadamard matrices of order 5 which contains a complex Hadamard matrix, then the family must be equivalent to a family with circulant core.  Thus, we begin our attempt with looking at circulant core Hadamard matrices, but first, a note:

    \begin{remark}
        Two non-equivalent families of quaternionic Hadamard matrices of order 5 can't both contain the complex Fourier matrix of order 5.
    \end{remark}
    
     This is because if $A, B\in M_5(\complex)$, both Hadamard, then $A$ and $B$ are both equivalent to the complex Fourier matrix, $F_5$, from Haagerup \cite{Haagerup5by5}.  Thus, the two families must be equivalent to each other.
    
%%%%%%%%%%%%%%%%%%%%%%%%%%%%%%%%%%%%%%%%%%%%%%   
    \subsubsection{Circulant Core Hadamard matrices of order 5}
    
    In our search to classify quaternionic Hadamard matrices of order 5, we are going to focus on circulant core matrices.  In the complex case, we know that all complex Hadamards are equivalent to the Fourier matrix \cite{Haagerup5by5}.  Indeed, it can be shown that the Fourier matrix of order 5 is indeed equivalent to a circulant core matrix.  For a quick discussion, one can see \cite{NiWor} and \cite{WorleyThesis}.
    
    Let $H$ be a circulant core quaternionic Hadamard matrix of order 5.  Then $H$ has the form: 
    $$ H = 
    \begin{bmatrix}
    1 & 1 & 1 & 1 & 1\\
    1 & a & b & c & d\\
    1 & d & a & b & c\\
    1 & c & d & a & b\\
    1 & b & c & d & a
\end{bmatrix}.$$

If none of the entries are complex, we can group conjugate so that (without loss of generality) $a\in\complex$. If all of the entries are complex, then $H$ is equivalent to the Fourier matrix.  Note that if three of the entries in the core are complex, then the fourth entry must also be complex since the sum of $1+a+b+c+d=0$.  Thus, we really have a few cases: all entries of $H$ are complex, i.e. $H$ is the complex Fourier matrix, two entries of the core are complex and two entries are in $\quaternions\setminus\complex$, or there is exactly one entry of $H$ that is complex.. If only one entry is complex that isn't $a$ we can always move row 5 to row 2 and shift the other rows down, i.e. perform a cyclic permutation of the core to move the complex number along the diagonal. Since we may assume that $a\in \complex$, we only need to let $b\in\complex$ or $c\in\complex$.  If $d\in\complex$ and $b,c\in\quaternions\setminus\complex$ then we may perform a cyclic permutation on the core to make so that the first two entries of the core are complex.  Hence, when we have two complex numbers in the core either $a,b\in\complex$ or $a,c\in \complex$. 

In conclusion, we have the following non-trivial cases: $a$ and $b$ being complex, $a$ and $c$ being complex, and only $a$ being complex.

If $a,b\in\complex$, then $H$ belongs to the sphere-family, the one-parameter family where $$a= -\frac14 \pm \sqrt{\frac{15}{16}}\ \imath$$ and $$c = -\frac14 \mp \sqrt{\frac{5}{48}}\ \imath + \sqrt{\frac56} \cos(t)\ \jmath +\sqrt{\frac56}\sin(t)\ \kappa$$ in the matrix: 
$$
\begin{bmatrix}
    1&1&1&1&1\\
    1 & a & \overline{a} & c & \overline{c}\\
    1 & \overline{c} & a & \overline{a} & c\\
    1 & c & \overline{c} & a & \overline{a} \\
    1 & \overline{a} & c & \overline{c} & a  \\
\end{bmatrix}.
$$
To find other matrices in this sphere multiply on the left by $0\ne q\in\quaternions$ and on the right by $q^{-1}$.

If $a,c\in\complex$, then $H$ is equivalent to the complex Fourier matrix of order 5.

\begin{theorem}
If a circulant core quaternionic Hadamard matrix of order 5 has two complex entries in the core (after appropriate group conjugation), then either $H$ is equivalent to the complex Fourier matrix of order 5 or $H$ belongs to the sphere-family stated previously.
\end{theorem}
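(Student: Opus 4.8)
The plan is to turn the Hadamard condition into a finite real polynomial system and solve it in the two sub-cases. First I would reduce to row-orthogonality alone: every entry is unital, so $HH^{*}=5I$, and since $\quaternions$ is a division ring a square matrix possessing a one-sided inverse is invertible, whence $H^{*}H=5I$ and column-orthogonality is automatic. Orthogonality of the all-ones first row against any other row collapses, by the circulant structure, to the single equation $1+a+b+c+d=0$, and orthogonality of the second row against the third, fourth, and fifth rows yields three further relations. Two of these are conjugates of one another and the third is self-conjugate, hence equivalent to the single real equation $1+2\Re(a\overline{c})+2\Re(b\overline{d})=0$. So the whole Hadamard condition is captured by $1+a+b+c+d=0$, one genuine quaternionic equation $E:\ 1+d\overline{a}+a\overline{b}+b\overline{c}+c\overline{d}=0$, that real equation, and the norm constraints $|a|=|b|=|c|=|d|=1$.

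Write $a=a_{0}+a_{1}\imath+a_{2}\jmath+a_{3}\kappa$ and likewise for $b,c,d$, and expand $E$ into its four real components. Consider first $a,b\in\complex$. Then $1+a+b+c+d=0$ forces the $\jmath$- and $\kappa$-coordinates of $d$ to be the negatives of those of $c$, and after this substitution the $\jmath$- and $\kappa$-parts of $E$ become a homogeneous linear system in the pair $(c_{2},c_{3})$ whose coefficient matrix has determinant $P^{2}+Q^{2}$, where $P=c_{0}+d_{0}-a_{0}-b_{0}$ and $Q=a_{1}+b_{1}-c_{1}-d_{1}$. This expansion, which relies on the noncommutative products $b\overline{c}$ and $c\overline{d}$, is the step I expect to be the main obstacle: it demands careful bookkeeping, but it produces a clean dichotomy. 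Since $c\notin\complex$ we have $(c_{2},c_{3})\neq(0,0)$, so the determinant must vanish, i.e. $P=Q=0$; combined with $1+a+b+c+d=0$ this gives $\Re a+\Re b=-\tfrac12$ and $a_{1}+b_{1}=0$.

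From there the constraints $|a|=|b|=1$ force $\Re a=\Re b=-\tfrac14$, hence $b=\overline{a}$ and $a=-\tfrac14\pm\sqrt{15/16}\,\imath$. The real equation then pins the $\imath$-coordinate of $c$, the real component of $E$ (using $|c|=1$) forces $\Re c=-\tfrac14$ so that $d=\overline{c}$, and the remaining $\imath$-component of $E$ turns out to hold identically. The only surviving freedom is $(c_{2},c_{3})$ on the circle $c_{2}^{2}+c_{3}^{2}=\tfrac56$, which I would identify with group conjugation about the $\imath$-axis; this is precisely the sphere-family. If instead $c\in\complex$, then all four entries are complex and Haagerup's classification \cite{Haagerup5by5} gives $H\sim F_{5}$.

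Finally, for $a,c\in\complex$ I would run the same computation with $b$ and $c$ interchanged: the $\jmath,\kappa$-parts of $E$ give a linear system in $(b_{2},b_{3})$ with determinant $R^{2}+S^{2}$, where $R=c_{0}-a_{0}$ and $S=a_{1}-c_{1}$. If $b\notin\complex$ the determinant must vanish, forcing $c=a$; substituting $c=a$ into the real component of $E$ and using $|a|=1$ together with $1+a+b+c+d=0$ yields $\Re a=-\tfrac32$, impossible for a unital quaternion. Hence $b$, and therefore $d$, must be complex, so $H$ is a complex circulant-core Hadamard matrix and is equivalent to $F_{5}$. Combining the two cases gives the statement.
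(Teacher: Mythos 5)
Your proof is correct, and it reaches the paper's exact conclusions by a genuinely different route. The paper writes down the same polynomial system but then delegates all elimination to Mathematica: it computes reduced Gr{\"o}bner bases (38 polynomials when $a,b\in\complex$, 36 when $a,c\in\complex$) and reads off decisive products such as $c_3(1+4d_0)=0$ and $b_2(3+2c_0)=0$. You avoid the computer algebra entirely, first using the conjugation symmetry of the circulant structure --- the $(2,5)$ orthogonality relation is the quaternionic conjugate of the $(2,3)$ relation, while the $(2,4)$ relation is self-conjugate and hence equivalent to the real equation $1+2\Re(a\overline{c})+2\Re(b\overline{d})=0$ --- to shrink the system to one quaternionic equation $E$ plus that real equation, and then making the structural observation that the $\jmath$- and $\kappa$-components of $E$ form a homogeneous $2\times 2$ linear system in the non-complex entry's $(\jmath,\kappa)$-coordinates whose determinant is a sum of two squares ($P^2+Q^2$, resp.\ $R^2+S^2$). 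I verified these determinant claims and the downstream algebra: in Case 1, $P=Q=0$ together with $1+a+b+c+d=0$ and the unit norms forces $\Re a=\Re b=-\tfrac14$, $b=\overline{a}$, $a_1=\pm\sqrt{15/16}$; the real equation then gives $c_1=\mp\sqrt{5/48}$ with sign opposite $a_1$; the real part of $E$ with $|c|=1$ gives $\Re c=-\tfrac14$, hence $d=\overline{c}$ and $c_2^2+c_3^2=\tfrac56$; and the leftover $\imath$-component of $E$ does vanish identically (a point you rightly flag, since it is needed to confirm the family is actually Hadamard). In Case 2, setting $c=a$ collapses the real part of $E$ to $-3-2a_0=0$, i.e.\ $\Re a=-\tfrac32$, the same contradiction the paper extracts as $c_0=-\tfrac32$ from its basis. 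Your preliminary reduction (row orthogonality implies column orthogonality because a one-sided inverse over $\quaternions$ is two-sided) is also sound and can be justified by the multiplicativity of the complex adjoint, $\chi_{AB}=\chi_A\chi_B$, stated in Section 6 of the paper. The trade-off between the two approaches: the paper's Gr{\"o}bner computation is mechanical and machine-reverifiable, whereas your argument is self-contained and human-checkable, explains \emph{why} the dichotomy occurs (the sum-of-squares determinant forces either complexity of the entry or two real linear relations), and your identification of the residual circle in $(c_2,c_3)$ with group conjugation about the $\imath$-axis matches the paper's description of the sphere-family.
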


The proof relies on Gr{\"o}bner bases and was inspired by discussions in appendix C \cite{goodthesis}.

\begin{proof}
    Let $$ H = 
    \begin{bmatrix}
    1 & 1 & 1 & 1 & 1\\
    1 & a & b & c & d\\
    1 & d & a & b & c\\
    1 & c & d & a & b\\
    1 & b & c & d & a
\end{bmatrix}.$$ 

Using the fact that $H$ is a Hadamard matrix, it is easy to check that the entries of $H$ must satisfy the equations: 
$$\begin{cases}
    |a| = |b| = |c| = |d| = 1,\\
    1 + a + b + c + d = 0,\\
    1+a\overline{d}+b\overline{a}+c\overline{b}+d\overline{c}=0,\\
    1+a\overline{c}+b\overline{d}+c\overline{a}+d\overline{b}=0  .
\end{cases}$$
\textbf{Case 1:}
Assume that $a,b\in\complex$.
Replacing the above system containing the quaternion variables with ``vector" forms of $a,b,c,$ and $d$, i.e.
\begin{align*}
    a &= a_0 + a_1\imath,\\
    b &= b_0 + b_1\imath,\\
    c &= c_0 + c_1\imath + c_2\jmath + c_3\kappa,\\
    d &= d_0 + d_1\imath + d_2\jmath +d_3\kappa,
\end{align*}
we get the system of polynomial equations in real variables:
$$
\begin{cases}
    a_0^2 + a_1^2 &= 1,\\
    b_0^2 + b_1^2 &= 1,\\
    c_0^2 + c_1^2 + c_2^2 + c_3^2 &= 1,\\
    d_0^2 + d_1^2 + d_2^2 + d_3^2 &= 1,\\
    1 + a_0 + b_0 + c_0 +d_0 &= 0,\\
    a_1 + b_1 + c_1 + d_1 &= 0,\\
    c_2 + d_2 &= 0,\\
    c_3 + d_3 &= 0,\\
    1 + 2 a_0 c_0 + 2 a_1 c_1 + 2 b_0 d_0 + 2b_1 d_1 &= 0,\\
\end{cases}
$$
Included in this system is another four equations stemming from the real part, and coefficients corresponding to $\imath, \jmath, $ and $\kappa$ in the equation \\ $1+a\overline{c}+b\overline{d}+c\overline{a}+d\overline{b}=0$.  We immediately find that $d_2 = -c_2$ and $d_3 = -c_3$.  So we may reduce our system.  Hence, by making the appropriate substitutions, we have that 
\begin{align*}
    a &= a_0 + a_1\imath,\\
    b &= b_0 + b_1\imath,\\
    c &= c_0 + c_1\imath + c_2\jmath + c_3\kappa,\\
    d &= d_0 + d_1\imath - c_2\jmath - c_3\kappa,
\end{align*}
and we have a system of 11 polynomial equations.  Using Mathematica, we can find a Gr{\"o}bner basis using degree reverse lexicographic ordering for our system of equations.  The basis that we find includes 38 polynomials.

Some of the equations that we find are:

$$\begin{cases}
    c_3(1+4d_0) &= 0,\\
    c_2(1+4d_0) &= 0,\\
    c_3(1+4c_0) &= 0,\\
    c_2(1+4c_0) &= 0,\\  
    c_3(c_1+d_1) &= 0,\\
    c_2(c_1+d_1) &= 0
\end{cases}$$

If $c\in\complex$ (which also implies $d\in\complex$), then we gain no new information, i.e. $H\in M_5(\complex)$, but if $c\not\in \complex$, then we know that either $c_2$ or $c_3$ is nonzero.  This yields 
$$\begin{cases}
    d_0 &= -\frac14,\\
    c_0 &= -\frac14,\\
    d_1 &= -c1,
\end{cases}$$
immediately implying that $d=\overline{c}$.

Substituting these values into the Gr{\"o}bner basis we find that 
$$\begin{cases}
    \frac12 + a_0 + b_0 &= 0,\\
    a_1+b_1 &= 0
\end{cases}$$
giving us that $b_0 = -\frac12+a_0$ and $b_1 = -a_1$. This gives us that 
\begin{align*}
    a &= a_0 + a_1\imath,\\
    b &= \Big(-\frac12-a_0\Big) - a_1\imath,\\
    c &= -\frac14 + c_1\imath + c_2\jmath + c_3\kappa,\\
    d &= -\frac14 - c_1\imath - c_2\jmath - c_3\kappa.
\end{align*}
After making the substitutions into the basis and simplifying, we find the equations 
$$
\begin{cases}
    \frac12+2a_0 &= 0,\\
    -\frac{15}{16} +a_1^2 &= 0.
\end{cases}
$$
Thus, $a_0 = -\frac14\implies b_0 =-\frac14 \implies b=\overline{a}$, and $a_1 = \pm\sqrt{\frac{15}{16}}$.  This implies that 
\begin{align*}
    a &= -\frac14 \pm \sqrt{\frac{15}{16}}\ \imath,\\
    b &= \overline{a},\\
    c &= -\frac14 + c_1\imath + c_2\jmath + c_3\kappa,\\
    d &= \overline{c}.
\end{align*}
Returning to the Gr{\"o}bner basis and after substituting the values that we have, we can also use the equations: 
$$
\begin{cases}
    c_2(-5+48c_1^2) &=0,\\
    c_3(-5+48c_1^2) &= 0.
\end{cases}
$$

Given that $c\not\in\complex$, we have that $c_1^2 = \frac{5}{48}$.  It is an easy check using the equations in the basis that $a_1$ and $c_1$ must have opposite signs.  Hence $$a_1 = \pm\sqrt{\frac{15}{16}} \implies c_1 \mp\sqrt{\frac{5}{48}}. $$ Lastly, after making the appropriate substitutions into the basis, we find that $$c_2^2+c_3^2 = \frac56,$$ i.e. $c_2 = \sqrt{\frac56}\cos(t)$ and $c_3 =\sqrt{\frac56}\sin(t)$ for some $t\in\reals$.

Therefore if $a,b\in \complex$ and $c,d\in\quaternions\setminus\complex$, then 

$$a= -\frac14 \pm \sqrt{\frac{15}{16}}\ \imath$$ and 
$$c = -\frac14 \mp \sqrt{\frac{5}{48}}\ \imath + \sqrt{\frac56} \cos(t)\ \jmath +\sqrt{\frac56}\sin(t)\ \kappa$$ 
with $b=\overline{a}$ and $d=\overline{c}$.

\textbf{Case 2:}

Assume that $a,c\in\complex$.  Using the fact that $1+a+b+c+d=0$, we find that 
\begin{align*}
    a &= a_0 + a_1 \ \imath,\\
    b &= b_0 + b_1\ \imath + b_2 \ \jmath + b_3\ \kappa,\\
    c &= c_0 + c_1 \ \imath,\\
    d &= d_0 + d_1\ \imath - b_2\ \jmath -b_3 \ \kappa.
\end{align*}
Again, using the equations from the ``vector" form of $a,b,c,$ and $d$ and the Hadamard condition, we can find a Gr{\"o}bner basis using Mathematica and degree reverse lexicographic ordering which contains 36 polynomials. In this basis, we have the equations: 
$$
\begin{cases}
b_2(3+2c_0) &= 0,\\
b_3(3+2c_0) &= 0.
\end{cases}
$$

If $b\in \complex$, then $a,b,c,d\in\complex$ and $H$ is equivalent to the complex Fourier matrix.  But if $b\not\in \complex$, then either $b_2\ne 0$ or $b_3\ne 0$.  Both would imply that $c_0 = -\frac32$.  This would contradict the fact that $c$ has norm 1.  Hence if $a,c\in\complex$, then $a,b,c,d\in\complex$.  Therefore, $H$ is equivalent to the complex Fourier matrix when $a,c\in\complex$.  This gives us a sphere of matrices where we group conjugate by a quaternion.

We also note that these families are in two distinct spheres. Consider the family with $a,b\in\complex$ and $c,d\not\in\complex$.  For some $0\ne q\in\quaternions$, then 
$$
\begin{bmatrix}
    1 & 1 & 1 & 1 & 1\\
    1 & qaq^{-1} & q\overline{a}q^{-1} & q c q^{-1} & q\overline{c}q^{-1} \\
    1 & q\overline{c}q^{-1} & qaq^{-1} & q\overline{a}q^{-1} & q c q^{-1}  \\
    1 & q c q^{-1} & q\overline{c}q^{-1} & qaq^{-1} & q\overline{a}q^{-1}   \\
    1 & q\overline{a}q^{-1} & q c q^{-1} & q\overline{c}q^{-1} & qaq^{-1}    \\
\end{bmatrix}
$$
is a family of quaternionic Hadamard matrices of order 5 with circulant core with $a,c$ defined above in the proof.  This particular family does not contain any complex Hadamard matrices.  If it did, then there is some $q\in \quaternions$ such that $qaq^{-1}, qcq^{-1}\in \complex$.  This would imply that $qaq^{-1}qcq^{-1} = qcq^{-1}qaq^{-1}$.  This would suggest that $ac=ca$, but it is an easy check to see that $ac\ne ca$.
\end{proof}

What happens if there is only one entry that is complex?  Again we may suppose that $a\in\complex$ and $b,c,d\in\quaternions\setminus\complex$. Using Mathematica to construct the Gr{\"o}bner basis with the Hadamard condition equations and
\begin{align*}
    a &= a_0 + a_1 \ \imath,\\
    b &= b_0 + b_1 \ \imath + b_2 \ \jmath + b_3\ \kappa,\\
    c &= c_0 + c_1 \ \imath + c_2 \ \jmath + c_3\ \kappa,\\
    d &= d_0 + d_1 \ \imath + d_2 \ \jmath + d_3\ \kappa,\\
\end{align*}
    we find the equations 
    $$
    \begin{cases}
        c_3d_2 - c_2d_3 &= 0,\\
        b_0c_3 - c_3d_0 &= 0,\\
        b_0c_2 - c_2d_0 &= 0.
    \end{cases}
    $$
    This implies that either $c\in\complex$ ($c_2=c_3=0$ which contradicts that $a$ is the only complex entry), or $b_0 = d_0$. 
    
    Thus, we may assume that the real parts of $b$ and $d$ are equal to each other.  Hence, 
    
    \begin{align*}
    a &= a_0 + a_1 \ \imath,\\
    b &= b_0 + b_1 \ \imath + b_2 \ \jmath + b_3\ \kappa,\\
    c &= c_0 + c_1 \ \imath + c_2 \ \jmath + c_3\ \kappa,\\
    d &= b_0 + d_1 \ \imath + d_2 \ \jmath + d_3\ \kappa.\\
\end{align*}
Using the fact that $1+a+b+c+d=0$, we have that $1+a_0+2b_0+c_0=0$.  Replacing $c_0$, $d_1, d_2,$ and $d_3$ we get that:
\begin{align*}
    a &= a_0 + a_1 \ \imath,\\
    b &= b_0 + b_1 \ \imath + b_2 \ \jmath + b_3\ \kappa,\\
    c &= (-1-a_0-2b_0) + c_1 \ \imath + c_2 \ \jmath + c_3\ \kappa,\\
    d &= b_0 + (-a_1-b_1-c_1) \ \imath + (-b_2-c_2) \ \jmath + (-b_3-c_3)\ \kappa.\\
\end{align*}
This allows us to recompute the Gr{\"o}bner basis in degree reverse lexicographic monomial ordering.  From this we get the following equations:
$$
\begin{cases}
    b_3c_2 -c_3b_2 &= 0,\\
    (1+2a_0+2b_0)(2b_3+c_3) &= 0,\\
    (1+2a_0+2b_0)(2b_2+c_2) &= 0,\\
    (1+2a_0+2b_0)(a_1+2b_1+c_1) &= 0.\\
\end{cases}
$$
If $1+2a_0+2b_0 \ne 0$, then $b_2=-2c_2$ and $b_3=-2c_3$.  Substituting these values into the Gr{\"o}bner basis we find that either $b_3 = 0$ or $b_0 = -\frac32$ which contradicts the fact that $|b|=1$.  We also get that $b_2=0$ or $b_0 = -\frac32$.  Hence, it must be the case that $b_2=0=b_3$ which implies that $b\in\complex$.  This contradicts the fact that $a$ is the only complex entry.  Hence, it must be the case that $1+2a_0+2b_0=0$.Thus, we get that $b_0 = -\frac12-a_0$ and $c_0=a_0$.  Therefore, 
\begin{align*}
    a &= a_0 + a_1 \ \imath,\\
    b &= \Big(-\frac12-a_0\Big) + b_1 \ \imath + b_2 \ \jmath + b_3\ \kappa,\\
    c &= a_0 + c_1 \ \imath + c_2 \ \jmath + c_3\ \kappa,\\
    d &= \Big(-\frac12-a_0\Big) + (-a_1-b_1-c_1) \ \imath + (-b_2-c_2) \ \jmath + (-b_3-c_3)\ \kappa.\\
\end{align*}
Since we know that $d_0=b_0=-\frac12-a_0$ (which also implies that $a_0=c_0$), we can use the lemma in Section 2 that will allow us to group conjugate the matrix by some $0\ne q\in\quaternions$ so that $qbq^{-1}\in\complex$ and $qdq^{-1}$ has no $\kappa$ part.  We may also assume that $b_1>0$.  The case when $b_1<0$ is the complex conjugate case (one can complex conjugate by group conjugating by $\jmath$; see \cite{quaternionmatrixtheory}).  Since we are conjugating the entire matrix by $q$, all of the entries have norm 1, and the real parts are left unchanged by conjugation we may replace $qaq^{-1}$ with $a$ and likewise for $b$, $c$, and $d$.  This and the fact that $1+a+b+c+d=0$ gives us that 
\begin{align*}
    a &= a_0 + a_1 \ \imath + a_2 \ \jmath + a_3 \ \kappa,\\
    b &= \Big(-\frac12-a_0\Big) + b_1 \ \imath,\\
    c &= a_0 + (-a_1-b_1-d_1) \ \imath + (-a_2-d_2) \ \jmath + -a_3\ \kappa,\\
    d &= \Big(-\frac12-a_0\Big) + d_1 \ \imath + d_2 \ \jmath.\\
\end{align*}
Since $1+a\overline{d}+b\overline{a}+c\overline{b}+d\overline{c}=0$, the $\jmath$ part of the equation implies that $a_3=0$ or $d_2=0$.  If $d_2=0$, then $d\in\complex$ which gives us that $b$ and $d$ are complex, so we have the complex Fourier matrix by the above work.  So we may assume that $a_3=0$ which implies $c_3=0$.  Thus, 
\begin{align*}
    a &= a_0 + a_1 \ \imath + a_2 \ \jmath,\\
    b &= \Big(-\frac12-a_0\Big) + b_1 \ \imath,\\
    c &= a_0 + (-a_1-b_1-d_1) \ \imath + (-a_2-d_2) \ \jmath,\\
    d &= \Big(-\frac12-a_0\Big) + d_1 \ \imath + d_2 \ \jmath.\\
\end{align*}
Using the fact that $\|b\|=1$ and $b_1>0$, we have that $$ b_1 = \sqrt{1-\Big(\frac12+a_0\Big)^2} = \frac{\sqrt{3-4a_0-4a_0^2}}{2}=\frac{\sqrt{(1-2a_0)(3+2a_0)}}{2}.$$  Using the real part of the equation $1+a\overline{d}+b\overline{a}+c\overline{b}+d\overline{c}=0$, we find that $$1-2a_0-4a_0^2-b_1^2-2b_1d_1-d_1^2-d_2^2=0$$ which implies that $$4a_0^2+2a_0-1 = (b_1+d_1)^2 +d_2^2.$$  Therefore, it must be the case that
$$0\leq 4a_0^2+2a_0-1 = \Big(2a_0+\frac12\Big)^2-\frac54.$$ Thus, it follows that $$ \frac{-1-\sqrt{5}}{4} \leq a_0 \leq \frac{-1+\sqrt{5}}{4}.$$  
Note that when $a_0 = \frac{-1-\sqrt{5}}{4}, \frac{-1+\sqrt{5}}{4}$, we can substitute into the equation to find that $a_2=c_2=d_2=0$, i.e. our matrix is complex and hence equivalent to the Fourier matrix.

Since $1=|c| = |-1-a-b-d| = |1+a+b+d|$, we have that 
\begin{align*}
    -1+a_0^2+a_1^2+a_2^2 + b_1^2+2b_1d_1+d_1^2+2a_1(b_1+d_1)+2a_2d_2+d_2^2 &= 0\\
    (b_1+d_1)^2 +2a_1(b_1+d_1)+2a_2d_2+d_2^2 &= 0
\end{align*}
Looking at the real and $\imath$ components of the equation $1+a\overline{d}+b\overline{a}+c\overline{b}+d\overline{c}=0$, we have that
\begin{align*}
    1+2a_0^2+\frac12(1+2a_0)^2+2b_1d_1-2a_1^2-2a_1(b_1+d_1)-2a_2^2-2a_2d_2 &=0\\
    1-2a_0-4a_0^2-(b_1+d_1)^2-d_2^2 &= 0
\end{align*}
which simplifies to 
\begin{align*}
     1+2a_0^2+\frac12(1+2a_0)^2+2b_1d_1-2(1-a_0^2) &= 2a_1(b_1+d_1)+2a_2d_2 \\
     1-2a_0-4a_0^2 &= (b_1+d_1)^2+d_2^2.
\end{align*}
Adding these equations together, we get exactly the relationship we got from $c$ having unit norm.  Therefore, we have that 
\begin{align*}
    0 &= 1+2a_0^2+\frac12(1+2a_0)^2+2b_1d_1-2(1-a_0^2) + 1-2a_0-4a_0^2 \\
    0 &= \frac12 + 2a_0^2+2b_1d_1 \\
    b_1d_1 &= -\frac14 (1+4a_0^2).
\end{align*}
Hence, $$d_1 = -\frac{(1+4a_0^2)}{4b_1} =\frac{-(1+4a_0^2)}{2\sqrt{3-4a_0-4a_0^2}} = \frac{-(1+4a_0^2)}{2\sqrt{(1-2a_0)(3+2a_0)}} $$ assuming $b_1\ne 0$ (which can't happen since $a_0\ne \frac12, -\frac32$).

Using the fact that $d_1^2+d_2^2=b_1^2$, we find $$d_2 = \pm \sqrt{\frac{2(1-a_0)(-1+2a_0+4a_0^2)}{-3+4a_0+4a_0^2}}.$$  Note that $d_1$ has the \textit{opposite} sign of $b_1$ as $b_1d_1<0$ from the above calculations.

Finally, we look at the $\kappa$ part of the equation  $1+a\overline{d}+b\overline{a}+c\overline{b}+d\overline{c}=0$.  We find that $$-a_2b_1 + a_2d_1 -b_1 (a_2+d_2) -a_1d_2 - (a_1+b_1+d_1)d_2 + d_1(a_2+d_2) = 0 $$ which implies $$a_2 = \frac{(a_1+b_1)}{d_1-b_1}d_2$$ assuming that $b_1\ne d_1$. Note that if $b_1=d_1$, we would have that $b=d$, an impossibility.  Recall that we find $b_1$ and $d_1$ have opposite signs.   Indeed if $b=d$, $b_0=d_0$ and $b_1=d_1$ along with $|b|=1$ forces $d_2=0$ and implies $b=d$, i.e. $d$ is complex and would fall to an earlier case. So, we may assume $d_1\neq b_1$. Hence, our choice of $a_0$ determines $b_1$, $d_1$, and $d_2$. Since the $a$ has unit norm, we have that $a_1$ and $a_2$ are on a circle of radius $\sqrt{1-a_0^2}$. This suggests that we can express $a_1$ in terms of $a_0$ and therefore $a_2$ in terms of $a_0$.  In fact, $a_1$ must satisfy the equation: $$a_0^2+a_1^2 + \Big(\frac{(a_1+b_1)d_2}{d_1-b_1}\Big)^2 -1=0. $$

This is a quadratic equation in $a_1$.  We can solve as we normally would.  This allows us to find $a_1$ in terms of just $a_0$.  Thus, we are able to eliminate the last parameter since $a_2$ will also be determined by $a_0$.  Hence, we have that there is a one-parameter family of quaternionic Hadamard matrices with circulant core containing one complex entry.  For each choice of $a_0$, we find a sphere of matrices by group conjugating by some nonzero quaternion.  

To recap our solutions, we list the components of $a,b,c,$ and $d$ in terms of $a_0$:

\begin{align*}
    b_0 &= -\frac12 -a_0\\
    b_1 &= \sqrt{1-\Big(\frac12+a_0\Big)^2} = \frac{\sqrt{3-4a_0-4a_0^2}}{2}=\frac{\sqrt{(1-2a_0)(3+2a_0)}}{2}\\
    d_1 &= \frac{-(1+4a_0^2)}{2\sqrt{(1-2a_0)(3+2a_0)}}\\
    d_2 &= \pm \sqrt{\frac{2(1-a_0)(-1+2a_0+4a_0^2)}{-3+4a_0+4a_0^2}}\\
   % a_1 &= \frac{\sqrt{3-4a_0-4a_0^2}\cdot(-1+2a_0+4a_0^2)\pm \sqrt{2(3+2a_0)^2(1-3a_0+2a_0^2)}}{2(2a_0-1)(3+2a_0)}\\
    a_1 &= \frac{\sqrt{1-2a_0}\cdot(-1+2a_0+4a_0^2) \pm \sqrt{2(3+2a_0)(1-3a_0+2a_0^2)}}{2(a_0-1)\sqrt{3+2a_0}},\\
   % a_2 &= -\frac{d_2}{|d_2|}\cdot \sqrt{3-4a_0-4a_0^2}\cdot  \frac{2\sqrt{3-4a_0-4a_0^2}\cdot(\pm1\mp a_0)+ \sqrt{2(3+2a_0)^2(1-3a_0+2a_0^2)}}{2\sqrt{2}(a_0-1)(2a_0-1)(3+2a_0)}.
    a_2 &= \frac{\sqrt{2-4a_0}\cdot (\mp 1 \pm a_0) + \sqrt{(3+2a_0)(1-3a_0+2a_0^2)}}{2(1-a_0)\sqrt{1-2a_0}}.
    %\mp \frac{\sqrt{2-4a_0}\cdot (\pm 1 \mp a_0) + \sqrt{(3+2a_0)(1-3a_0+2a_0^2)}}{2(1-a_0)\sqrt{1-2a_0}}.
\end{align*}

The sign of $d_2$ may be either positive or negative.  However, the signs in the formulas for $a_1$ and $a_2$ must match.  It is easy to show using Mathematica that these define a solution given that $c=-1-a-b-d$.

%%% Discussion after 5x5 circulant core stuff is below here

Now that we have classified quaternionic Hadamard matrices of order five with circulant core, there is some discussion to be had. The authors do not believe that the discussion of the family discovered above ends at its discovery. Mainly, one wonders which matrices in this family are equivalent or not. Note that if one chooses $a_0=\frac{-1-\sqrt{5}}4$ and $a_0=\frac{-1+\sqrt{5}}4$, one finds the complex Fourier matrix thus resulting in these choices of $a_0$ equivalent up to row/column permutation. Are there other such choices? Can we describe all choices of $a_0$ that result in inequivalent matrices?  Also, it is of interest to note that the previous case with two complex entries may be recovered in this case well by setting $a_0=-\frac14$.  We handled the cases separately as it is more natural to lower the number of entries that are complex rather than increase them. 

\begin{question}
    What choices in $a_0$ result in equivalent matrices?
\end{question}

It was shown in \cite{Haagerup5by5} that any complex Hadamard matrix of order five is equivalent to a Hadamard matrix of circulant core (namely the $5\times 5$ Fourier matrix). However, Haagerup's argument makes use of the commutativity of $\complex$ so his techniques can't be easily emulated in the quaternionic case. We then can ask the question:

\begin{question}
    Does there exist a quaternionic Hadamard matrix of order five that is 
\textbf{not} equivalent to a quaternionic Hadamard matrix of circulant core?
\end{question}

There is an immediate answer to this question. This shows there is a vast difference between the complex and quaternionic case.  

%%%%%%%%%%%%%%%%%%%%%%%%%%%%%%%%%%%%%%%%%%%%%%%%%%%%%%%%%%
\subsubsection{Non-circulant core quaternionic Hadamard of order 5}

It is natural to think that all quaternionic Hadamard matrices may be equivalent to a circulant core due to the work of Haagerup in the complex case \cite{Haagerup5by5}, but we will provide an example of a non-circulant core quaternionic Hadamard matrix. The following example is \textit{impossible} in the complex case by Lemma 2.7 in \cite{Haagerup5by5}, but we find solutions over $M_5(\quaternions)$. 

\begin{example}
Consider a Hadamard matrix of the form 
$$ H=
\begin{bmatrix}
    1 & 1 & 1 & 1 & 1\\
    1 & a & b & c & d\\
    1 & b & a & d & c\\
    1 & c & d & a & b\\
    1 & d & c & b & a
\end{bmatrix}.
$$
Writing down the equations from the Hadamard condition, using the fact that the elements have unit norm, and the elements in the second row sum to 0, we can write the quaternions in vector form and proceed using a Gr{\"o}bner basis again in degree reverse lexicographic monomial ordering.  We may assume without loss of generality that $a\in \complex$.  A natural question then arises - what happens if $c$ is also a complex number? In this case, we find that \begin{align*}
    a &= \imath,\\
    b &= -\frac12\ \imath + \frac{\sqrt{3}}{2}\cos(t) \ \jmath + \frac{\sqrt{3}}{2} \sin(t)\ \kappa,\\
    c &= -1,\\
    d &= -\frac12\ \imath - \frac{\sqrt{3}}{2}\cos(t) \ \jmath - \frac{\sqrt{3}}{2} \sin(t)\ \kappa.\\
\end{align*}
It is a straight forward calculation to show that this gives us a Hadamard matrix. In fact, we get a family of matrices for the value of $t\in \reals$.  For each $t$, we also are given a sphere by conjugating the matrix by some $0\ne q\in\quaternions$.  It is also interesting to note that 
\begin{align*}
    a &= \imath,\\
     b &= \imath \cdot \Big(\cos\Big(\frac{2\pi}{3}\Big)+q\sin\Big(\frac{2\pi}{3}\Big)\Big)\\
    c &= a^2=-1,\\
    d &= \imath \cdot \Big(\cos\Big(\frac{2\pi}{3}\Big)-q\sin\Big(\frac{2\pi}{3}\Big)\Big)\\
    where q &= \sin(t)\ \jmath - \cos(t)\ \kappa
\end{align*}
and that $a^4 = b^4 = c^4 = d^4 = 1$ and $q^2=-1$. We should also note that $b,d\in [\imath]$, i.e. both $b$ and $d$ are group conjugal equivalent to $\imath$.  Both $b$ and $d$ have 0 real part and both have norm 1.  This implies that $ubu^{-1} = \imath =qdq^{-1}$ for $u,q\in\quaternions$.  This shows the fact that $b^4 = u^{-1}\imath^4u = 1$ and similarly for $d$.  We'll come back to this example in sections four and five.
\end{example}

%%%%%%%%%%%%%%%%%%%%%%%%%%%%%%%%%%%%%%%%%%%%%%%%%%%%%%%%%%%%%%%%%%%%

\section{Finding quaternionic Hadamard matrices through the complex Fourier Matrix}

We wish to construct a quaternionic version of the Fourier matrix.  In other words, we wish to build a quaternionic Hadamard matrix of order $n$ with all entries $n^{\text{th}}$ roots of unity, i.e. we wish the entries of the matrix to solve the equation $x^n-1=0$.  From \cite{quaternionpolynomial}, we know that there are $n$ complex roots of unity $x_1,\cdots, x_n$.  If $x_l\in\complex\setminus\reals$, then we end up with a \text{sphere} of solutions generated by $x_l$.  This sphere is denoted by $$[x_l]=\{qx_lq^{-1}\ |\ q\in \quaternions\setminus\{0\}\}.$$  If $x_l\in\reals$, then $[x_l]=\{x_l\}$ since $\reals$ is the center of $\quaternions$.

Let $q\in \quaternions$ where $q^2=-1$, we have that $q\in [\imath]$.  Then we have that $\Re(q)=\Re(\imath) = 0$ and $|q|=|\imath|=1$.  This implies that $q$ can be found on the 3-dimensional unit sphere centered at the origin.  Hence, we may parameterize $q$ as $$q=q(\theta, \varphi)=\cos(\theta)\sin(\varphi)\imath + \sin(\theta)\sin(\varphi) \jmath + \cos(\varphi)\kappa$$ for $\theta, \varphi\in\reals$.

It can easily be shown that a quaternionic version of de Moivre's theorem holds for $q\in\quaternions$ with $q^2=-1$, i.e. $$e^{q\theta} = \cos(\theta)+q\sin(\theta).$$ %The only difference in the complex case versus the quaternionic case is our regular exponent rules and this is due to the non-commutativity of the quaternions.

There are some things worth noting here. First, consider the last example of the previous section; recall that $b$ and $d$ utilize $\imath$ and $q$ which are both roots of $-1$. It's quite bizarre to find $b$ and $d$ to be fourth roots of unity. Indeed, $\imath=e^{\imath(\frac{\pi}{2})}$ and $\cos(\frac{2\pi}{3})+q\sin(\frac{2\pi}{3})=e^{q(\frac{2\pi}{3})}$.  $\imath$ is a fourth root of unity while $e^{q(\frac{2\pi}{3})}$ is a third root of unity. How is their product a fourth root of unity? Take heed in that the non-commutativity of quaternions prevents us from using a common proposition in complex arithmetic:
\begin{remark}
Let $q_1$ and $q_2$ be roots of $-1$ and $\theta_1$ and $\theta_2$ be real numbers. In general, $e^{q_1\theta_1}e^{q_2\theta_2}\neq e^{q_1\theta_1+q_2\theta_2}$.
\end{remark}

Since this property fails in general, there are many more ways to concoct roots of unity over the quaternions than the complex numbers. On the other hand, group conjugation plays nice with these exponentials: 

\begin{remark}
For $e^{q\theta}$ where $\theta \in \reals$ and $q^2=-1$, group congugating by $0\ne v\in\quaternions$ leads to 
\begin{align*}
    [e^{q\theta}] \ni ve^{q\theta}v^{-1} &= v\Big(\cos(\theta)+q\sin(\theta)\Big)v^{-1}\\
    &= \cos(\theta) +vqv^{-1}\sin(\theta)\\
    &= e^{(vqv^{-1})\theta}\in e^{[q]\theta}.
\end{align*}
This shows that both spheres are subsets of each other, i.e. they are the same sphere.
\end{remark}

Next we look at our idea for this section: constructing "quaternionic" Fourier matrices of order $n$.

\begin{theorem}
There exists infinitely many dephased, non-equivalent quaternionic Hadamard matrices for all $n\geq 3$, namely $$F_n(\theta, \varphi) = \Big[e^{\frac{2\pi q(\theta, \varphi) j k}{n}}\Big]_{j,k=0}^{n-1}$$ 
where $q(\theta, \varphi)=\cos(\theta)\sin(\varphi)\imath + \sin(\theta)\sin(\varphi) \jmath + \cos(\varphi)\kappa $. Moreover, this family contains the complex Fourier matrix of order $n$ for each $n\in\naturals$.
\end{theorem}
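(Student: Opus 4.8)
The plan is to exploit the fact that, for fixed parameters, the single pure quaternion $q=q(\theta,\varphi)$ generates a commutative copy of $\complex$ inside $\quaternions$, so that $F_n(\theta,\varphi)$ is the ordinary complex Fourier matrix with $\imath$ relabelled as $q$. First I would record that $q$ has zero real part and, since $\cos^2\theta\sin^2\varphi+\sin^2\theta\sin^2\varphi+\cos^2\varphi=1$, unit modulus; by the characterization of the solutions of $t^2=-1$ recorded in Section 2 this gives $q^2=-1$. Hence the real subalgebra $\reals[q]=\{s+tq:s,t\in\reals\}$ is closed under multiplication, and $\Phi\colon s+tq\mapsto s+t\imath$ is a ring isomorphism of $\reals[q]$ onto $\complex$. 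The key feature is that $\Phi$ intertwines the two conjugations: because $tq$ is a pure quaternion, $\Phi(\overline{s+tq})=\Phi(s-tq)=s-t\imath=\overline{\Phi(s+tq)}$. By the quaternionic de Moivre identity each entry $e^{q\,2\pi jk/n}=\cos(2\pi jk/n)+q\sin(2\pi jk/n)$ lies in $\reals[q]$, and $\Phi$ carries it to $e^{\imath\,2\pi jk/n}$, the $(j,k)$ entry of the complex Fourier matrix $F_n$.

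Granting this, the Hadamard and dephasing properties fall out. Every entry satisfies $|e^{q\alpha}|=1$, and since all entries lie in the one commutative field $\reals[q]$, each row inner product $\sum_k (F_n(\theta,\varphi))_{jk}\,\overline{(F_n(\theta,\varphi))_{j'k}}$ again lies in $\reals[q]$; applying the conjugation-preserving homomorphism $\Phi$ turns it into $\sum_k e^{\imath 2\pi(j-j')k/n}$, which vanishes for $j\ne j'$ by orthogonality of the complex Fourier matrix. As $\Phi$ is injective, the quaternionic sum is $0$ as well, and the symmetry $(F_n)_{jk}=(F_n)_{kj}$ disposes of the columns. The $j=0$ row and $k=0$ column consist of $e^0=1$, so the matrix is dephased. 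Choosing $(\theta,\varphi)=(0,\tfrac\pi2)$ forces $q=\imath$ and hence $F_n(0,\tfrac\pi2)=F_n$; this settles the final sentence of the statement for every $n$.

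For the infinitude I would single out the $(1,1)$ entry $e^{q\,2\pi/n}=\cos(2\pi/n)+q\sin(2\pi/n)$. When $n\ge 3$ one has $\sin(2\pi/n)\ne 0$, so this entry — and therefore the whole matrix — depends injectively on the unit pure quaternion $q$. Since $(\theta,\varphi)\mapsto q$ sweeps the entire unit sphere of pure quaternions, this already produces a continuum of pairwise distinct dephased Hadamard matrices. (For $n=2$ the same entry is real, which is exactly why $n\ge 3$ is needed.)

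The genuinely delicate step, and the one I expect to be the main obstacle, is the word \emph{non-equivalent}. My plan is to separate the members by the common imaginary axis $\reals q$ carried by every entry: this line is visibly preserved by permutations and by diagonal scalings whose factors lie in $\reals[q]$, so members with $\reals q\ne\reals q'$ cannot be matched by such moves. The point one must confront is that conjugation by a constant unit quaternion $v$ is realized as the diagonal scaling $(vI)H(\overline v I)$, which rotates the axis from $\reals q$ to $\reals(vqv^{-1})$; consequently the notion of equivalence has to be pinned down so as to exclude (or be taken modulo) these global rotations before inequivalence can be asserted. Once that is done the axis becomes a complete invariant and the sphere of distinct axes supplies the required infinite supply of inequivalent matrices — in sharp contrast with the complex case, where the prime-order Fourier matrix is rigid.
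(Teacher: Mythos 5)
Your verification of the Hadamard property, the dephasing, and the containment of the complex Fourier matrix is correct and is in substance the paper's own argument. The paper's entire proof is the remark that each entry has absolute value $1$ and that the row sums are ``the same sums found in the complex case just group conjugated by $q$''; your isomorphism $\Phi\colon\reals[q]\to\complex$ is exactly this mechanism made explicit, since writing $q=v\imath v^{-1}$ for a unit quaternion $v$ (every unit pure quaternion is group conjugal to $\imath$, by item 7 of the paper's theorem on conjugation) realizes $\Phi$ as conjugation by $v$ and gives $F_n(\theta,\varphi)=vF_nv^{-1}$ outright. Your injectivity observation about the $(1,1)$ entry $\cos(2\pi/n)+q\sin(2\pi/n)$ for $n\geq 3$, which yields a continuum of pairwise \emph{distinct} dephased matrices, is correct and is more than the paper records.

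The step where you stall --- the word \emph{non-equivalent} --- is a genuine gap, but you have diagnosed it accurately, and you should know that the paper does not close it either: no inequivalence argument appears anywhere in its proof. In fact the difficulty you isolate is a defect of the theorem statement itself. Under the paper's own definitions, conjugation by the unit quaternion $v$ with $vqv^{-1}=\imath$ is the diagonal equivalence $(vI)\,F_n(\theta,\varphi)\,(\overline{v}I)=F_n$, a legitimate move of the form $P_1D_1HD_2P_2$ with $P_1=P_2=I$; so every member of the family is equivalent to the complex Fourier matrix, and the paper concedes as much when it describes the family as a single ``sphere of quaternionic Hadamard matrices generated by the complex Fourier matrix.'' Read literally against the paper's Definition of equivalence, ``non-equivalent'' is therefore false, and can only be salvaged by restricting the equivalence relation (e.g., working modulo global rotations, as you propose). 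Be aware, though, that even your salvage is incomplete as stated: you show the axis $\reals q$ is preserved by permutations and by diagonal factors lying in $\reals[q]$, but the equivalence permits arbitrary unit-quaternion diagonal entries, which need not respect $\reals[q]$ entrywise; promoting the axis to a complete invariant of dephased equivalence would require a further argument. In short, your proof establishes exactly what the paper actually proves, and the gap you flagged lies in the theorem as stated rather than in your attempt.
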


The proof is straightforward, but the key is that each element still has absolute value 1 and the resulting sums are the same sums found in the complex case just group conjugated by $q$.  Hence, all the sums are 0 as desired.

Essentially, what we have found is a \textit{sphere} of quaternionic Hadamard matrices generated by the complex Fourier matrix of order $n$.

This is true for every $n$.  Hence, as long as $F_n\not\in M_n(\reals)$, i.e. $n\geq 3$, we get infinitely many matrices in the sphere, i.e. we have a family of quaternionic Hadamard matrices that contains the complex Fourier matrix for every $n$.  

This is an important fact since Petrescu, Nicoara, and White showed in \cite{Ni1} , \cite{Pet}, and \cite{NiWh} that the complex Fourier matrix of order $n$ is isolated amongst complex Hadamard matrices of order $n$ if and only if $n$ is prime.

\begin{example}
Let $n=4$.  Consider $w^n=e^{\frac{2\pi q n}{4}}=e^{\frac{\pi q n}{2}}=\cos(\frac{\pi}{2}n)+q\sin(\frac{\pi}{2}n)$ for $q=q(\theta, \varphi) = \cos(\theta)\sin(\varphi)\imath + \sin(\theta)\sin(\varphi) \jmath + \cos(\varphi)\kappa$. 
\begin{align*}
    n=0 &\implies w^0 = 1\\
    n=1 &\implies w^1 = q\\
    n=2 &\implies w^2 = -1\\
    n=3 &\implies w^3 = -q
\end{align*}
Due to the cyclic nature of $\mathbb{Z}_4$ and of cosine and sine, the powers of $w$ are constant on conjugacy classes of $\mathbb{Z}_4$. 

This leads to the family $$F_4(\theta, \varphi) = \begin{bmatrix}
     1 & 1 & 1 & 1\\
     q & -1 & -q & 1\\
     -1 & 1 & -1 & 1\\
     -q & -1 & q & 1
\end{bmatrix}. $$
\end{example}

\begin{example}
Consider $\omega$ as $$\omega=\omega(\theta, \varphi) = \cos\Big(\frac{2\pi}{5}\Big) + q(\theta, \varphi) \sin\Big(\frac{2\pi}{5}\Big),$$ i.e. the fifth roots of unity where $q$ is defined above as $$q= q(\theta, \varphi)=\cos(\theta)\sin(\varphi)\imath + \sin(\theta)\sin(\varphi) \jmath + \cos(\varphi)\kappa. $$ Then we have a two parameter family of Hadamard matrices, i.e. 
$$H_5(\theta, \varphi) = \begin{bmatrix}
    1&1&1&1&1\\
    1&\omega(\theta, \varphi) &\omega(\theta, \varphi)^3&\omega(\theta, \varphi)^4&\omega(\theta, \varphi)^2\\
    1&\omega(\theta, \varphi)^2 &\omega(\theta, \varphi)&\omega(\theta, \varphi)^3&\omega(\theta, \varphi)^4\\
    1&\omega(\theta, \varphi)^4 &\omega(\theta, \varphi)^2&\omega(\theta, \varphi)&\omega(\theta, \varphi)^3\\
    1&\omega(\theta, \varphi)^3 &\omega(\theta, \varphi)^4&\omega(\theta, \varphi)^2&\omega(\theta, \varphi)^1\\
\end{bmatrix}. $$

Since $H_5\Big(0,\frac\pi2\Big)\in M_5(\complex)$ is a Hadamard matrix, we have that it is equivalent to the Fourier matrix of order 5 \cite{Haagerup5by5}, and it is circulant core.  Moreover, we have an infinite family of inequivalent quaternionic Hadamard matrices containing the complex Fourier matrix of order 5.
\end{example}

%%%%%%%%%%%%%%%%%%%%%%%%%%%%%%%%%%%%%%%%%%%%%%%%%%%%%%%%%%%%%%%%%%%%
\section{Butson-type Quaternionic Hadamard Matrices}

In \cite{Butson}, Butson defined the class of generalized Hadmard matrices of order $n$ based on $k$-th roots of unity.  These matrices are now referred to as Butson-type Hadamard matrices.

\begin{definition}
The set $BH(r,n)$ is the set of Hadamard matrices of order $n$ where every entry of a element of $BH(r,n)$ is a $r^\text{th}$ root of unity.
\end{definition}
Our plan is to extend the Butson-type Hadamard matrices to the quaternionic case.

%%%%%%%%%%%%%%%%%%%%%%%%%%%%%%%%%%%%%%%%%%%%%%
    \subsection{Butson-$q$-type Hadamard Matrices}
    
    We know from the work in \cite{quaternionpolynomial} that the solutions to the equation $x^r-1=0$ are spheres of equations generated by complex $r^\text{th}$ roots of unity. The solutions will look like $x_s = e^{\frac{2\pi \imath s}{r}}$ and any $vx_sv^{-1}$ for $v\in\quaternions\setminus\{0\}$.  This can be done for each $s\in\{0,1,\cdots, r-1\}$. Using de Moivre's identity, we have that 
    $$vx_sv^{-1} = \cos\Big(\frac{2\pi s}{r}\Big) + v\imath v^{-1} \sin\Big(\frac{2\pi}{r}\Big) = e^{\frac{2\pi (v\imath v^{-1}) s}{r}}.$$  Since $\imath^2=-1$, we have that $q^2=-1$ for $q=v\imath v^{-1}$.  In other words, we have that $r^\text{th}$ roots of unity take the form $$e^{\frac{2\pi q s}{r}}$$ for any $q\in\quaternions$ with $q^2=-1$.  For sake of simplicity, we will fix such a $q$ and define \textbf{Butson-$q$-type} Hadamard matrices.
    
    \begin{definition}
 $A$ is a quaternionic matrix of \textbf{Buston-q-type} if (for some fixed $q$ so that $q^2=-1$) $A$ is a Hadamard matrix whose entries are generated by $\omega=e^{\frac{2\pi q}{r}}$, where $r$ is a positive integer. We denote the collection of order $n$ Hadamard matrices of Buston-q-type with $rth$ roots of unity as $H(q,r,n)$.
 \end{definition}
 
 \begin{proposition}
 Let $r$, $q$, and $n$ be given so that $H(q,r,n)$ is nonempty. Then one can find a unit quaternion $u$ (with $u^{-1}=\overline{u}$) so that $uA\overline{u}$ is a complex Hadamard matrix of Butson type.
 \end{proposition}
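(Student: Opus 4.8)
The plan is to exploit the fact that every entry of $A$ is built from a single fixed pure unit quaternion $q$, so that one group conjugation straightening $q$ out will straighten out the entire matrix simultaneously. First I would record that the hypothesis $q^2=-1$ forces $\Re(q)=0$ and $|q|=1$, exactly as observed in Section 2; hence $q$ is a pure unit quaternion with $q=q_1\imath+q_2\jmath+q_3\kappa$ and $q_1^2+q_2^2+q_3^2=1$. By item 6 of the group-conjugation theorem, $q$ is group conjugal to $\sqrt{q_1^2+q_2^2+q_3^2}\,\imath=\imath$. After normalizing the conjugator to have unit norm, this produces a unit quaternion $u$ (so that $u^{-1}=\overline{u}$) satisfying $uq\overline{u}=\imath$. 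The crucial point is that this $u$ is chosen once and for all, before any entrywise computation.

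Next I would compute the effect of conjugation by this $u$ on a typical entry. Every entry of $A$ has the form $\omega^s=e^{2\pi q s/r}=\cos(2\pi s/r)+q\sin(2\pi s/r)$ by the quaternionic de Moivre identity (which applies since all powers share the same $q$, so the problematic non-commutativity flagged in the remark never arises). Since group conjugation fixes real numbers (item 2) and acts $\reals$-linearly on the imaginary part, I obtain
$$u\omega^s\overline{u} = \cos\Big(\frac{2\pi s}{r}\Big) + (uq\overline{u})\sin\Big(\frac{2\pi s}{r}\Big) = \cos\Big(\frac{2\pi s}{r}\Big) + \imath\sin\Big(\frac{2\pi s}{r}\Big) = e^{2\pi \imath s/r}.$$
Thus every entry of $uA\overline{u}$ is a complex $r$-th root of unity.

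Finally, I would invoke the consequence of item 3 of the group-conjugation theorem noted immediately after its statement: if $A$ is a quaternionic Hadamard matrix, then so is $uA\overline{u}$. Combining this with the previous step, $uA\overline{u}$ is a Hadamard matrix all of whose entries are complex $r$-th roots of unity, i.e.\ a member of $BH(r,n)$, which is precisely a complex Butson-type Hadamard matrix. This completes the argument.

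I do not expect a genuine obstacle here; the only point deserving care is that a \emph{single} $u$ works for \emph{all} entries at once. This is exactly because every entry lies on the common axis $q$, so the rotation sending $q$ to $\imath$ acts uniformly on the whole family $\{\cos\theta+q\sin\theta:\theta\in\reals\}$, carrying it into $\complex$. I would therefore emphasize in the writeup that the conjugator is fixed via the relation $uq\overline{u}=\imath$ before it is applied entrywise, rather than being chosen separately for each entry.
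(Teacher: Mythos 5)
Your proposal is correct and takes essentially the same approach as the paper: both arguments fix a \emph{single} unit quaternion $u$ that straightens the generator into $\complex$ and then apply it entrywise, using the fact that group conjugation preserves the Hadamard property. The only cosmetic difference is that the paper conjugates $\omega$ itself and invokes $u\omega^k\overline{u}=(u\omega\overline{u})^k$, whereas you conjugate the axis via $uq\overline{u}=\imath$ and expand each entry by the quaternionic de Moivre identity --- interchangeable computations, with yours spelling out the preservation of the Hadamard condition that the paper leaves implicit.
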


\begin{proof}
Let $\omega=e^{\frac{2\pi q}{r}}$. Let $A$ be in $H(q,r,n)$. Then one can find a $u$ described in the proposition so that $u\omega\overline{u}$ is complex. Every other entry of $A$ is of the form $\omega^k$. Noticing that $u\omega^k\overline{u}=(u\omega\overline{u})^k$, we have the result.
\end{proof}

An immediate consequence of the preceding proposition:
\begin{corollary}
    $H(q,n,n)$ is non-empty for all $n$.  In other words, $H(q,n,n)\ne \emptyset$ if and only if $BH(n,n)\ne \emptyset$ which is true in the complex case.
\end{corollary}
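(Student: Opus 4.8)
The plan is to prove the corollary as a direct consequence of the preceding proposition together with the well-known complex fact that $BH(n,n)$ is nonempty for every $n$ (the complex Fourier matrix of order $n$ is a Butson-type Hadamard matrix with entries that are $n$th roots of unity). I would structure the argument as an ``if and only if'' between the quaternionic statement $H(q,n,n)\neq\emptyset$ and the complex statement $BH(n,n)\neq\emptyset$, and then invoke the complex Fourier matrix to close the loop.

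First I would prove the forward direction. Suppose $H(q,n,n)\neq\emptyset$ and let $A\in H(q,n,n)$. By the preceding proposition, there is a unit quaternion $u$ with $u^{-1}=\overline{u}$ such that $u A \overline{u}$ is a complex Hadamard matrix whose entries are $n$th roots of unity, i.e.\ $uA\overline{u}\in BH(n,n)$. Hence $BH(n,n)\neq\emptyset$. For the reverse direction I would start from a complex Butson matrix and transport it into the quaternionic world: given $B\in BH(n,n)$, each entry is a power of $\zeta=e^{\frac{2\pi\imath}{n}}$. Fix the prescribed $q$ with $q^2=-1$, choose the unit quaternion $u$ (guaranteed by item six of the group-conjugation theorem) so that $u\imath\overline{u}=q$, and set $\omega=e^{\frac{2\pi q}{n}}=u\zeta\overline{u}$. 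Then $\overline{u}B u$ has each entry of the form $\overline{u}\zeta^k u=(\overline{u}\zeta u)^k=\omega^k$, so every entry is generated by $\omega$; since group conjugation preserves norms and orthogonality (item three of the group-conjugation theorem), $\overline{u}Bu$ is again Hadamard and thus lies in $H(q,n,n)$, which is therefore nonempty.

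Finally I would conclude by noting that $BH(n,n)\neq\emptyset$ holds unconditionally in the complex case, witnessed by the complex Fourier matrix $F_n$ with $(F_n)_{rs}=e^{\frac{2\pi\imath (rs)}{n}}$, whose entries are $n$th roots of unity and whose rows are pairwise orthogonal. Combining this with the equivalence just established gives $H(q,n,n)\neq\emptyset$ for all $n$, as claimed.

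I do not expect a serious obstacle here, since the real content is entirely packaged in the preceding proposition (for the forward direction) and in the elementary observation that group conjugation by the appropriate $u$ turns a complex Butson matrix into a Butson-$q$-type one. The only point requiring minor care is verifying that a $u$ can indeed be chosen with $u\imath\overline{u}=q$; this follows from item six of the group-conjugation theorem, since $q$ and $\imath$ are both pure unit quaternions and are therefore group conjugal. Once that is in hand, the multiplicativity $u\omega^k\overline{u}=(u\omega\overline{u})^k$ and the preservation of the Hadamard property under group conjugation do all the remaining work.
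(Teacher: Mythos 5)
Your proposal follows essentially the paper's route: the paper's one-line proof invokes the quaternionic Fourier matrix $F_n(\theta,\varphi)=\bigl[e^{\frac{2\pi q jk}{n}}\bigr]_{j,k=0}^{n-1}$ already constructed in Section 4, which is exactly the witness your reverse direction rebuilds by group conjugating the complex Fourier matrix, while your forward direction is just the immediately preceding proposition; you merely spell out the stated ``if and only if'' that the paper leaves implicit. One slip needs fixing, though: having chosen $u$ with $u\imath\overline{u}=q$, you must transport $B$ via $uB\overline{u}$, not $\overline{u}Bu$. Indeed $u\zeta^k\overline{u}=(u\zeta\overline{u})^k=\omega^k$ as desired, whereas $\overline{u}\zeta u=\cos\frac{2\pi}{n}+(\overline{u}\imath u)\sin\frac{2\pi}{n}$, and $\overline{u}\imath u$ need not equal $q$ (from $u\imath\overline{u}=q$ one gets $\overline{u}\imath u=\overline{u}^{\,2}qu^{2}$), so $\overline{u}Bu$ lands in $H(\overline{u}\imath u,n,n)$ rather than $H(q,n,n)$. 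Swapping the conjugator, or equivalently defining $u$ by $\overline{u}\imath u=q$, repairs this in one stroke; everything else in your argument, including the appeal to the complex Fourier matrix to witness $BH(n,n)\neq\emptyset$, is sound and matches the paper's intent.
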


This follows directly from the work shown previously in constructing families of quaternionic Hadamard matrices through the complex Fourier case.

As is known, all solutions to $x^2=-1$ have zero real part and have norm $1$, as such for any $q_1,q_2$ solutions to $x^2=-1$ there is a unital $u$ so that $uq_1\overline{u}=q_2$. A consequence of this is:

\begin{proposition}
Let $q_1$ and $q_2$ be solutions to $x^2=-1$. Then For every $A\in H(q_1,r,n)$ one can find a $u$ unital so that $uA\overline{u}\in H(q_2,r,n)$.
\end{proposition}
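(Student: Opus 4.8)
The plan is to find a single unit quaternion $u$ that transports the base root $\omega_1 = e^{2\pi q_1/r}$ to $\omega_2 = e^{2\pi q_2/r}$ under group conjugation, and then to observe that conjugating the entire matrix $A$ by this one $u$ carries every entry to the corresponding power of $\omega_2$ while preserving the Hadamard condition.

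First I would produce the conjugator using the remark immediately preceding the statement: since $q_1$ and $q_2$ are solutions of $x^2 = -1$, both have zero real part and unit norm, so there is a unital $u$ with $uq_1\overline{u} = q_2$. Crucially, this $u$ depends only on $q_1$ and $q_2$ and not on any individual entry of $A$. Next, using the quaternionic de Moivre identity $\omega_1 = \cos(2\pi/r) + q_1\sin(2\pi/r)$ together with the facts that group conjugation fixes real scalars and is $\reals$-linear, I would compute
$$u\omega_1\overline{u} = \cos(2\pi/r) + (uq_1\overline{u})\sin(2\pi/r) = \cos(2\pi/r) + q_2\sin(2\pi/r) = \omega_2.$$
Finally, because $u$ is unital we have $u\overline{u} = 1$, so a generic entry $\omega_1^k$ of $A$ satisfies $u\omega_1^k\overline{u} = (u\omega_1\overline{u})^k = \omega_2^k$; thus $uA\overline{u}$ is computed entrywise and has all of its entries among the powers of $\omega_2$. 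Item 3 of the group-conjugation theorem then guarantees that $uA\overline{u}$ remains a Hadamard matrix, so $uA\overline{u} \in H(q_2,r,n)$, completing the argument.

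I do not expect a genuine obstacle here, since the essential content is already packaged in the earlier results (existence of the conjugator between two square roots of $-1$, invariance of the Hadamard property under conjugation, and the proof technique of the preceding proposition). The one place deserving care is the claim that a single conjugator handles all entries simultaneously: this rests on conjugation by a unital quaternion distributing over integer powers, and I would flag the hypothesis $u\overline{u}=1$ explicitly at that step, since it is exactly what makes $u\omega_1^k\overline{u} = (u\omega_1\overline{u})^k$ valid.
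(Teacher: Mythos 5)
Your proposal is correct and follows essentially the same route as the paper's proof: select a unital $u$ with $uq_1\overline{u}=q_2$ (guaranteed since both roots have zero real part and unit norm), observe via de Moivre that $u\omega_1\overline{u}=\omega_2$, and conclude that conjugation carries each entry $\omega_1^k$ to $\omega_2^k$ while preserving the Hadamard property. Your version merely makes explicit the steps the paper leaves implicit, in particular that $u\overline{u}=1$ justifies $u\omega_1^k\overline{u}=(u\omega_1\overline{u})^k$, which is a welcome clarification rather than a deviation.
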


\begin{proof}
Select $u$ unital so that $uq_1\overline{u}=q_2$. Every entry of $A$ is generated by $\omega_1=\cos(\frac{2\pi}{r})+q_1\sin(\frac{2\pi}{r})$. Upon group conjugation by $u$, one yeilds that $uA\overline{u}$ is generated by $\omega_2=\cos(\frac{2\pi}{r})+q_2\sin(\frac{2\pi}{r})$ hence by definition $uA\overline{u}$ lies in $H(q_2,r,n)$.
\end{proof}

One wonders why in the quaternions we must declare a fixed $q$ solution of $x^2=-1$. This is because of the difficulties that arise from the following proposition.

\begin{proposition}
Let $q_1$ and $q_2$ be solutions to $x^2=-1$. If $q_1\neq q_2$ and $q_1\neq \overline{q}_2$ (i.e. $q_1\neq \pm q_2$), then there is no $u$ unital (or otherwise) so that both $uq_1\overline{u}$ and $uq_2\overline{u}$ are complex.
\end{proposition}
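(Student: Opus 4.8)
The plan is to translate the whole question into the geometry of the unit sphere of pure quaternions. First I would recall, from the discussion following item 11 of the quaternion basics, that a quaternion $p$ solves $x^2=-1$ if and only if $\Re(p)=0$ and $|p|=1$; hence $q_1$ and $q_2$ are pure quaternions of unit length, i.e.\ points on the unit sphere in $\mathrm{span}\{\imath,\jmath,\kappa\}$. I would also record the elementary fact that a pure unit quaternion which happens to be complex must lie in $\mathrm{span}\{\imath\}$, and therefore equals $\pm\imath$. This pins down exactly what ``complex'' can mean for the conjugated quaternions.

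Next I would reduce the general (``or otherwise'') conjugator to a unital one. For any nonzero $u$ we have $\overline{u}=(u\overline{u})\,u^{-1}$ with $N:=u\overline{u}>0$ a positive real, so $u p\overline{u}=N\,(upu^{-1})$. Since $N$ is a positive real scalar, $up\overline{u}$ is complex precisely when $upu^{-1}$ is, and $upu^{-1}$ is unchanged when $u$ is rescaled to be unital. Thus it suffices to treat $u$ unital, and for such $u$ item 5 of the group-conjugation theorem says that $p\mapsto upu^{-1}=up\overline{u}$ acts on the pure quaternions as a rotation $R\in SO(3)$; in particular it preserves purity, so $R(q_1)$ and $R(q_2)$ are again pure unit quaternions.

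The heart of the argument is then a rigidity observation about $R$. Suppose, for contradiction, that both $R(q_1)$ and $R(q_2)$ are complex. By the first step each then lies in $\{\imath,-\imath\}$, giving four cases. If $R(q_1)=R(q_2)$ then $q_1=q_2$ because $R$ is injective; if $R(q_1)=-R(q_2)$ then $q_1=-q_2=\overline{q}_2$, since conjugation negates pure quaternions. Every one of the four cases forces $q_1=q_2$ or $q_1=\overline{q}_2$, i.e.\ $q_1=\pm q_2$, contradicting the hypothesis.

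I expect the only place demanding genuine care is the reduction to the unital case and the claim that $R$ keeps the images pure; both follow from the scalar identity $u p\overline{u}=N\,upu^{-1}$ together with item 5. An equivalent, more algebraic finish is also available and worth noting: from that same identity one checks that $uq_1\overline{u}$ and $uq_2\overline{u}$ commute if and only if $q_1q_2=q_2q_1$; two complex numbers always commute, whereas for pure quaternions $q_1q_2-q_2q_1=2\,(q_1\times q_2)$ vanishes only when $q_1$ and $q_2$ are parallel, which for unit vectors again yields $q_1=\pm q_2$.
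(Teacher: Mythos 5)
Your proof is correct and follows essentially the same route as the paper's: reduce to a unital conjugator, use that group conjugation preserves real parts and norms to force both conjugated quaternions into $\{\imath,-\imath\}$, and conclude $q_1=\pm q_2$. You additionally make explicit the rescaling identity $up\overline{u}=(u\overline{u})\,upu^{-1}$ that the paper dispatches with a citation, and your closing commutator remark ($q_1q_2-q_2q_1=2\,q_1\times q_2$ for pure quaternions) is a valid alternative finish, but the core argument is the same.
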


\begin{proof}
We prove the contrapositive. Suppose there is such a $u$ so that both $uq_1\overline{u}$ and $uq_2\overline{u}$ are complex. WOLOG we may assume $u$ is unital (see proposition 1 of section 8). Then as group conjugation preserves norm and real parts, we must have that $uq_1\overline{u}$ and $uq_2\overline{u}$ are $\pm\imath$ hence $uq_1\overline{u}=\pm uq_2\overline{u}$ which implies that $q_1=\pm q_2$.
\end{proof}

This shows that even "simple" Hadamard matrices made up of $4th$ roots of unity such as $\begin{bmatrix} 1&i\\j&k\end{bmatrix}$ can't be group conjugated into a matrix of complex entries.

\begin{proposition}
If $A$ is a Hadamard matrix containing two roots of unity derived from solutions $q_1$ and $q_2$ of $x^2=-1$, $q_1\neq \pm q_2$, then there does \textbf{not} exist a $u$ unital so that $uA\overline{u}$ is a complex Hadamard matrix of Buston type.
\end{proposition}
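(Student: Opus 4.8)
The plan is to reduce this statement directly to the preceding proposition, which already forbids a single unital $u$ from simultaneously complexifying two axes $q_1 \neq \pm q_2$. The present proposition should be essentially a corollary once we unpack what it means for an entry of $A$ to be ``derived from'' $q_1$ or $q_2$.

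First I would make the interpretation precise. Since $A$ contains roots of unity derived from $q_1$ and $q_2$, it has two entries of the form $\zeta_1 = \cos(\alpha_1) + q_1\sin(\alpha_1)$ and $\zeta_2 = \cos(\alpha_2) + q_2\sin(\alpha_2)$ with $\sin(\alpha_1),\sin(\alpha_2) \neq 0$; the nonvanishing of these sines is exactly what it means for $q_1$ and $q_2$ to appear genuinely as the axes of these entries rather than for the entries to be real. Then I would argue by contradiction: suppose a unital $u$ exists with $uA\overline{u}$ a complex Hadamard matrix. By the earlier remark on group conjugation of exponentials (and using $\overline{u}=u^{-1}$), we have $u\zeta_i\overline{u} = \cos(\alpha_i) + (uq_i\overline{u})\sin(\alpha_i)$. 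The real part $\cos(\alpha_i)$ is harmless, so $u\zeta_i\overline{u}$ lies in $\mathrm{span}\{1,\imath\}$ if and only if its pure part $(uq_i\overline{u})\sin(\alpha_i)$ does. Because $\sin(\alpha_i)\neq 0$, this forces $uq_i\overline{u}$ to be complex for $i=1,2$; indeed, being the group conjugate of a solution of $x^2=-1$, each $uq_i\overline{u}$ is again a pure unit quaternion, so complexity makes it equal to $\pm\imath$.

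Having shown that a single unital $u$ renders both $uq_1\overline{u}$ and $uq_2\overline{u}$ complex, I would invoke the previous proposition: since $q_1 \neq \pm q_2$, no unital $u$ can accomplish this, a contradiction. Hence no such $u$ exists, which is the claim.

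The step carrying the real content is the reduction itself, namely the observation that complexifying a non-real root of unity with axis $q_i$ is equivalent to complexifying the axis $q_i$ alone; this hinges on conjugation acting linearly on the pure part together with the nonvanishing coefficient $\sin(\alpha_i)$. I do not expect a serious computational obstacle. The only point requiring genuine care is the bookkeeping around ``derived from'': confirming that the two relevant entries are truly non-real, so that their axes are well defined as $q_1$ and $q_2$, and that conjugation neither collapses these axes nor introduces spurious $\jmath,\kappa$ components that could accidentally cancel.
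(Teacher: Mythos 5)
Your proposal is correct and matches the paper's intent exactly: the paper states this proposition without proof, presenting it as an immediate consequence of the preceding proposition that no unital $u$ can simultaneously make $uq_1\overline{u}$ and $uq_2\overline{u}$ complex when $q_1 \neq \pm q_2$. Your write-up simply supplies the implicit reduction (writing the entries as $\cos(\alpha_i) + q_i\sin(\alpha_i)$ with $\sin(\alpha_i)\neq 0$, noting conjugation preserves real part and norm so a complexified pure unit axis must equal $\pm\imath$), and that bookkeeping is sound.
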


From our work, we get the following result:

\begin{proposition}
 $H(q,r,n)$ is non-empty if and only if $H(i,r,n)=BH(r,n)$ is non-empty.
\end{proposition}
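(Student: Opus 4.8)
The plan is to deduce this equivalence almost immediately from the group-conjugation correspondence between $H(q_1,r,n)$ and $H(q_2,r,n)$ established in the preceding proposition, together with the observation that $H(\imath,r,n)$ is literally the set $BH(r,n)$ of complex Butson-type Hadamard matrices. In other words, almost all of the analytic work has already been done, and the task here is to package it correctly into a biconditional.

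First I would record the identification $H(\imath,r,n) = BH(r,n)$. Since $\imath$ is itself a solution of $x^2=-1$, the defining generator for $H(\imath,r,n)$ is $\omega = e^{\frac{2\pi\imath}{r}}$, whose powers are precisely the complex $r^\text{th}$ roots of unity. Thus a matrix lies in $H(\imath,r,n)$ exactly when it is an order-$n$ Hadamard matrix all of whose entries are (complex) $r^\text{th}$ roots of unity, which is the definition of $BH(r,n)$. This identification is what lets us restate the claim purely in terms of the sets $H(q,r,n)$.

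Next, for the forward direction, I would assume $H(q,r,n)\neq\emptyset$ and pick $A\in H(q,r,n)$. Because both $q$ and $\imath$ solve $x^2=-1$, the earlier proposition supplies a unital $u$ with $uA\overline{u}\in H(\imath,r,n) = BH(r,n)$, so $BH(r,n)$ is nonempty. The reverse direction is symmetric: starting from a nonempty $BH(r,n) = H(\imath,r,n)$ and any $A$ in it, the same proposition (now with the roles of $\imath$ and $q$ interchanged) yields a unital $u$ with $uA\overline{u}\in H(q,r,n)$, so $H(q,r,n)$ is nonempty. Combining the two directions gives the stated equivalence.

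I do not expect a genuine obstacle here, since the substantive content—that group conjugation by a unital quaternion sends one generator $\cos(\tfrac{2\pi}{r})+q_1\sin(\tfrac{2\pi}{r})$ to the other $\cos(\tfrac{2\pi}{r})+q_2\sin(\tfrac{2\pi}{r})$ while preserving norms, real parts, and the orthogonality (Hadamard) condition—was already carried out in the preceding proposition. The only point deserving a moment's care is confirming that both directions of the biconditional are legitimately available, i.e.\ that the correspondence is reversible: if the unital $u$ carries $q_1$ to $q_2$ then $\overline{u}=u^{-1}$ carries $q_2$ back to $q_1$, and since group conjugation by a unital quaternion preserves the entire Hadamard structure, the map $A\mapsto uA\overline{u}$ is a bijection between $H(q_1,r,n)$ and $H(q_2,r,n)$. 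Once this is noted, the proof reduces to two one-line applications of the earlier result.
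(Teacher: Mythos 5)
Your proposal is correct and follows essentially the same route as the paper, which leaves this proposition unproved precisely because it is an immediate consequence of the preceding proposition (group conjugation by a unital $u$ with $uq_1\overline{u}=q_2$ maps $H(q_1,r,n)$ into $H(q_2,r,n)$) together with the identification $H(\imath,r,n)=BH(r,n)$. Your added remark that the conjugation is reversible, so both directions of the biconditional are available, is exactly the point the paper takes for granted.
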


The main observation that should be taken from this section is that any result from complex Butson type Hadamard matrices applies to Butson $q$-type Hadamard matrices.  Since if $H$ is a Butson $q$-type Hadamard matrix, we have that we may conjugate $H$ $q$ by some $0\ne u\in \quaternions$ with $uqu^{-1}=\imath$ so that $uHu^{-1}\in M_n(\complex)$.

%%%%%%%%%%%%%%%%%%%%%%%%%%%%%%%%%%%%%%%%%%%%%%%%%%%%%%%  
\subsection{On general Butson type}
    
\begin{definition}
$A$ is a quaternionic matrix of Buston type if $A$ is a Hadamard matrix of some order (say n) whose entries are wholly $rth$ roots of unity for some $r$. We denote the collection of such matrices as $H(r,n)$. 
\end{definition}

Obviously, $H(q,r,n)\subseteq H(r,n)$ for all $q,r,$ and $n$. However, this subset can be strict. The matrix $\begin{bmatrix} 1&\imath\\\jmath&\kappa \end{bmatrix}$ is in $H(4,2)$ but is not in $H(q,4,2)$ for any $q\in \quaternions\setminus\reals$.

\begin{question}
    Though the matrix $\begin{bmatrix} 1&\imath\\\jmath&\kappa \end{bmatrix}$ is not a $q$-type matrix for any particular $q$, this matrix as a order 2 Hadamard matrix is equivalent to the dephased matrix $$\begin{bmatrix} 1&1\\1&-1 \end{bmatrix} = \begin{bmatrix} 1&0\\0&-\jmath \end{bmatrix} \begin{bmatrix} 1&\imath\\\jmath&\kappa \end{bmatrix} \begin{bmatrix}1&0\\ 0&-\imath \end{bmatrix}.
    $$ So the question arises, are all Hadamard matrices $q$-type or equivalent to $q$-type?  
\end{question}

This doesn't exactly ask if all Butson type matrices are Butson $q$-type, but answering this question will allow us to know if there are some quaternionic Hadamard matrices are non-$q$-type. We can answer this question in the negative. Consider the examples below.

\begin{example}
 Consider the 3-parameter generic family of order 4 quaternionic Hadamard matrices found in \cite{4x4quaternionichadamard}. 
 
 Importantly, we find that for some values of $a$ and $x$, we get that $c,d\in\quaternions$ and $c,d\not\in span\{1,\imath,\jmath\}$.
 In other words, there cannot be a single $q\in \quaternions$ such that $a, b, c, d$ can all be of the form $e^{q\theta}$ for some $\theta\in \reals$.  
 This is shown by the fact that $a\in span\{ 1,\imath,\jmath\} \implies q \in span\{1,\imath, \jmath\}$, but $b\in \complex$ which would imply that $q\in\complex$, but it's not.
 Thus, no $q$ may exist for all $a\in span\{1,\imath,\jmath\}$.  
 
 For a particular example, since $a$ and $x$ have unit length, we have that $$a = \cos(\theta)\sin(\varphi) + \sin(\theta)\sin(\varphi)\imath + \cos(\varphi)\jmath$$ and $$x = \cos(\gamma)\imath+\sin(\gamma)\jmath$$ for $\theta, \varphi, \gamma\in [0,2\pi).$  Since we want $a\ne -1$, we can choose $\theta=\varphi=\gamma=\frac\pi 4$.
 
 It follows that 
 \begin{align*}
     a &= \frac12+\frac{1}{2}\imath+\frac{1}{\sqrt{2}} \jmath\\
     b &= -\frac45-\frac35 \imath\\
     c &= \frac{1}{12} \left(2 \sqrt{2}-9\right) + \frac{1}{4} \left(-1-\sqrt{2}\right) \imath + \frac{1}{4} \left(-1-\sqrt{2}\right)\jmath -\frac{1}{12}\kappa\\
     d &= -\frac{1}{10} + \frac{3}{10}\imath + \frac{3}{10}\jmath + \frac{9}{10}\kappa.
 \end{align*}
 
 It is easily shown that $a = e^{q\frac{\pi}{3}}$ where $q = \frac{1}{\sqrt{3}}\imath + \sqrt{\frac{2}{3}}\jmath$ (note that $q^2=-1$), and $b = e^{-\imath \theta}$ where $\theta = \arccos(-\frac{4}{5})$, but since $q\ne i$, we immediately have that our Hadamard matrix is NOT $q$-type for a particular $q$.  
\end{example}

\begin{example}
For another example, we have the example at the end of the order 5 discussion.  We gave an example of a Hadamard matrix that isn't circulant core and all of the entries are fourth roots of unity.  Therefore, we have that $H(4,5)$ is non-empty.  
\end{example}

This example is interesting due to the fact that $BH(4,5)$ is empty (i.e. there are no complex Hadamard matrices of order 5 with entries that are fourth roots of unity), but in the quaternionic case, the set is non-empty.  Also note that the matrices cannot be Butson-$q$-type. We know that $\imath$ and $-1$ can be written as a $q$-type (exponential of some $q$ ($=\imath$ in the case) multiplied by some real argument), but the other two entries cannot be as they are both multiplied by $\imath$. We should also note again that since these are fourth roots of unity, the two entries that are not $\imath$ or $-1$ must be group conjugate to $\imath$ since they both have norm 1 and have zero real part. 

%%%%%%%%%%%%%%%%%%%%%%%%%%%%%%%%%%%%%%%%%%%%%%%%%%%%%%%
\section{Creating quaternionic Hadamard matrices from real and complex orthogonal matrices}
 We end this paper with ways to create quaternionic Hadamard matrices from certain real and complex Hadamard matrices. Before we begin, we note the easily shown proposition:
 \begin{proposition}
 $H$ is a Hadamard matrix over the quaternions if and only if $HH^*=nI_n$ and every entry of $H$ is of norm 1.
 \end{proposition}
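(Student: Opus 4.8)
The plan is to prove the biconditional by unpacking the definition of a quaternionic Hadamard matrix and translating the orthogonality and norm conditions into the single matrix equation $HH^* = nI_n$, where $H^*$ denotes the conjugate transpose (with entrywise quaternionic conjugation). First I would fix notation: write $H = [h_{jk}]$ with $h_{jk} \in \quaternions$, and let $H^* = [\overline{h_{kj}}]$. The key observation is that the $(j,\ell)$ entry of the product $HH^*$ is precisely $\sum_{k} h_{jk}\,\overline{h_{\ell k}}$, which by item 3 of the first theorem in Section 2 is exactly the inner product of row $j$ with row $\ell$ of $H$. This identifies the off-diagonal entries of $HH^*$ with the pairwise row inner products and the diagonal entries with the squared norms of the rows.

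The forward direction proceeds as follows. Suppose $H$ is a quaternionic Hadamard matrix. By definition every entry has norm $1$, which gives the second half of the claim immediately. For the diagonal of $HH^*$, the $(j,j)$ entry is $\sum_k h_{jk}\overline{h_{jk}} = \sum_k |h_{jk}|^2 = \sum_k 1 = n$, using the norm-$1$ condition on each entry. For the off-diagonal entries with $j \neq \ell$, the Hadamard condition states that any two distinct rows are orthogonal, i.e.\ $\sum_k h_{jk}\overline{h_{\ell k}} = 0$, which is exactly the $(j,\ell)$ entry of $HH^*$. Hence $HH^* = nI_n$. For the converse, I would simply read these same equations in reverse: the hypothesis $HH^* = nI_n$ forces $\sum_k h_{jk}\overline{h_{\ell k}} = 0$ for $j \neq \ell$ (row orthogonality), while the separately assumed norm-$1$ condition on each entry supplies the remaining requirement in the definition. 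Together these are precisely the conditions defining a quaternionic Hadamard matrix.

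A point requiring a little care is that the definition in the paper phrases the Hadamard condition in terms of both rows \emph{and} columns being pairwise orthogonal, whereas $HH^* = nI_n$ directly encodes only the row condition. I would address this by noting that $HH^* = nI_n$ means $H^{-1} = \tfrac{1}{n}H^*$, so $H^*H = nI_n$ as well; since the $(k,m)$ entry of $H^*H$ is $\sum_j \overline{h_{jk}} h_{jm}$, this gives the column orthogonality and column norm conditions for free. One must be attentive here to the non-commutativity of $\quaternions$: the argument that a one-sided inverse is two-sided is valid for finite-dimensional matrices over $\quaternions$, but because the entries do not commute the expression $\sum_j \overline{h_{jk}} h_{jm}$ is the appropriate column inner product and the order of factors cannot be swapped casually.

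The main obstacle, and indeed the only subtlety in an otherwise routine verification, is precisely this bookkeeping around the ordering of quaternionic factors and the passage between the one-sided matrix identity $HH^* = nI_n$ and the two-sided row/column orthogonality in the definition. Once the inner product of item 3 is correctly matched to the entries of $HH^*$ and the standard fact that a left inverse of a square matrix over an associative unital ring (here using associativity from item 5) is also a right inverse is invoked, the proof is immediate. I expect the entire argument to be short, with the bulk of the writing devoted to carefully stating the entrywise identities rather than to any genuinely hard step.
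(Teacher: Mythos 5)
Your proof is correct and is exactly the routine verification the paper leaves implicit (the authors state the proposition as ``easily shown'' and give no proof): identify the $(j,\ell)$ entry of $HH^*$ with the row inner product $\sum_k h_{jk}\overline{h_{\ell k}}$, use the norm-$1$ hypothesis for the diagonal, and invoke the fact that a one-sided inverse in the finite-dimensional algebra $M_n(\quaternions)$ is two-sided to get $H^*H=nI_n$ and hence column orthogonality. The only quibble is that under the paper's convention $x\cdot y=x\overline{y}$ the column inner product is $\sum_j h_{jk}\overline{h_{jm}}$ rather than your $\sum_j \overline{h_{jk}}h_{jm}$, but this costs nothing since $\sum_j h_{jk}\overline{h_{jm}}=\overline{(H^*H)_{mk}}$, so $H^*H=nI_n$ forces both sums to vanish for $k\neq m$.
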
 
 
 Also recall from earlier in the paper that any quaternion $a+b\imath +c\jmath +d\kappa$ can be written as a specific sum of complex numbers $x+y\jmath$, $x,y\in\complex$ since $\kappa =\imath \jmath$. It follows that one can write any quaternionic Hadamard matrix $A$ as a sum $A_0+A_1\jmath$ where $A_0, A_1$ are matrices with complex entries.

\begin{definition}
Let $A=A_0+A_1\jmath$ be an $n\times n$ matrix with quaternion entries and $A_0,A_1$ with complex entries. Then the \textbf{complex adjoint} of $A$, denoted $\chi_{A}$ is the $2n\times 2n$ matrix
$$\chi_{A}=\begin{bmatrix}A_0&A_1\\-\overline{A_1}&\overline{A_0} \end{bmatrix} $$
\end{definition}

The complex adjoint has a variety of properties. An abbreviated list from section 4 of \cite{quaternionmatrixtheory} is provided below:

\begin{theorem}
Let $A$ and $B$ be $n\times n$ matrices over the quaternions. Then:
    \begin{description}
    \item{1.)} $\chi_{I_n}=I_{2n}$
    \item{2.)} $\chi_{AB}=\chi_{A}\chi_{B}$
    \item{3.)} $\chi_{A+B}=\chi_A +\chi_B$
    \item{4.)} $\chi_{A^*}=(\chi_{A})^*$
    \item{5.)} $\chi_A$ is unitary, Hermitian, or normal if and only if $A$ is unitary, Hermitian, or normal respectively.
    \end{description}
\end{theorem}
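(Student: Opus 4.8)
The plan is to prove parts 1--4 by direct computation from the block definition of $\chi_A$, establish that the assignment $A\mapsto\chi_A$ is injective, and then deduce part 5 formally from parts 1, 2, and 4 together with that injectivity. The single identity driving everything is $\jmath z=\overline{z}\jmath$ for every $z\in\complex$, which follows from $\jmath\imath=-\imath\jmath=-\kappa$; applied entrywise to a complex matrix $M$ it gives $\jmath M=\overline{M}\jmath$ inside any product. First I would dispatch the easy items: for part 1, writing $I_n=I_n+0\cdot\jmath$ gives $\chi_{I_n}=\mathrm{diag}(I_n,\overline{I_n})=I_{2n}$, and part 3 is immediate from additivity of complex conjugation applied blockwise to $A+B=(A_0+B_0)+(A_1+B_1)\jmath$.

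The heart of the argument is part 2. Expanding $AB=(A_0+A_1\jmath)(B_0+B_1\jmath)$ and pushing each $\jmath$ to the right using $\jmath B_0=\overline{B_0}\jmath$, $\jmath B_1=\overline{B_1}\jmath$, and $\jmath^2=-1$ yields
$$AB=(A_0B_0-A_1\overline{B_1})+(A_0B_1+A_1\overline{B_0})\jmath,$$
so that $(AB)_0=A_0B_0-A_1\overline{B_1}$ and $(AB)_1=A_0B_1+A_1\overline{B_0}$. I would then multiply the two block matrices $\chi_A\chi_B$ and verify that its four blocks are exactly $(AB)_0$, $(AB)_1$, $-\overline{(AB)_1}$, and $\overline{(AB)_0}$; this is where the bookkeeping is densest, but each block matches once one uses that conjugation is additive and multiplicative.

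For part 4 I would first compute, using $\overline{x+y\jmath}=\overline{x}-y\jmath$ for $x,y\in\complex$ (which itself follows from $\overline{\jmath}=-\jmath$ and $\jmath z=\overline{z}\jmath$), that the quaternionic conjugate transpose satisfies $A^*=A_0^{*}-A_1^{T}\jmath$, i.e. $(A^*)_0=A_0^{*}$ and $(A^*)_1=-A_1^{T}$. Substituting these into the definition of $\chi_{A^*}$ and comparing with the blockwise complex conjugate transpose of $\chi_A$ shows the two coincide entry by entry.

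Finally, part 5 follows formally. Since $A_0$ and $A_1$ can be read off directly from the top two blocks of $\chi_A$, the map $A\mapsto\chi_A$ is injective. Then $A$ unitary means $AA^*=A^*A=I_n$, and applying $\chi$ with parts 1, 2, and 4 converts this into $\chi_A(\chi_A)^*=(\chi_A)^*\chi_A=I_{2n}$; the converse uses injectivity to pull the block identity back to $A$. The Hermitian and normal cases are identical in spirit: $A=A^*\iff\chi_A=(\chi_A)^*$ and $AA^*=A^*A\iff\chi_A(\chi_A)^*=(\chi_A)^*\chi_A$, each direction invoking part 4 and injectivity. I expect the only genuine obstacle to be the sign-and-conjugation bookkeeping in part 2, together with the dual computation $A^*=A_0^{*}-A_1^{T}\jmath$ in part 4; once the identity $\jmath z=\overline{z}\jmath$ is in hand, everything else is mechanical.
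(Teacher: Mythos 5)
Your proof is correct, and it follows essentially the same route as the source: the paper offers no argument of its own for this theorem (it is quoted from Zhang \cite{quaternionmatrixtheory}), and your direct block computation --- the identity $\jmath z=\overline{z}\jmath$, the product formula $(AB)_0=A_0B_0-A_1\overline{B_1}$, $(AB)_1=A_0B_1+A_1\overline{B_0}$, the adjoint formula $A^*=A_0^*-A_1^T\jmath$, and the formal deduction of part 5 from parts 1, 2, and 4 together with injectivity of $A\mapsto\chi_A$ --- is precisely the standard proof given there. One immaterial remark: your parenthetical $\jmath\imath=-\kappa$ assumes the Hamilton convention $\imath\jmath=\kappa$, whereas this paper's stated relation $\imath\jmath\kappa=1$ forces $\imath\jmath=-\kappa$; nothing in your argument is affected, since you only use that $\imath$ and $\jmath$ anticommute, which holds under either convention.
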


\begin{lemma}
If $\chi_{AA^*}=nI_{2n}$, then $AA^*=nI_n$
\end{lemma}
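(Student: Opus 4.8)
The plan is to exploit the uniqueness of the decomposition of a quaternionic matrix into complex blocks, together with the explicit block form of the complex adjoint. Setting $M=AA^*$, I would first write $M=M_0+M_1\jmath$ with $M_0,M_1\in M_n(\complex)$. As recalled just before the lemma, every quaternion is uniquely of the form $x+y\jmath$ with $x,y\in\complex$ (since $\kappa=\imath\jmath$), and applying this entrywise shows that such a decomposition of $M$ exists and is \emph{unique}. This uniqueness is the only real content of the argument; everything else is bookkeeping.

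With this decomposition in hand, the hypothesis $\chi_{AA^*}=nI_{2n}$ becomes, by definition of the complex adjoint,
$$\begin{bmatrix} M_0 & M_1 \\ -\overline{M_1} & \overline{M_0} \end{bmatrix}=nI_{2n}.$$
I would then compare the four $n\times n$ blocks on each side. The top-left block yields $M_0=nI_n$, and the top-right block yields $M_1=0$. The bottom two blocks give $-\overline{M_1}=0$ and $\overline{M_0}=nI_n$, which are automatically consistent with the first two equations since $n$ is real (so $\overline{nI_n}=nI_n$ and $-\overline{0}=0$); thus no new constraints appear and the system is consistent.

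Reassembling, $M=M_0+M_1\jmath=nI_n+0\cdot\jmath=nI_n$, that is, $AA^*=nI_n$, as desired. Equivalently, one could phrase the whole thing as injectivity of the map $A\mapsto\chi_A$: items 1 and 3 of the preceding theorem give $\chi_{nI_n}=n\chi_{I_n}=nI_{2n}$, so the hypothesis reads $\chi_{AA^*}=\chi_{nI_n}$, and injectivity of $\chi$ (itself a restatement of the same block-uniqueness observation) forces $AA^*=nI_n$. I do not expect any genuine obstacle here; the only point requiring care is to check that all four blocks are mutually consistent, which they are precisely because $n$ is a real scalar.
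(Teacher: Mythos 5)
Your proof is correct and follows essentially the same route as the paper: write $AA^*=M_0+M_1\jmath$, invoke the block form of $\chi_{AA^*}$, and compare blocks to get $M_0=nI_n$ and $M_1=0$. The paper simply states ``the result follows'' after displaying the block equation; your version fills in the same bookkeeping (including the uniqueness of the decomposition $x+y\jmath$) explicitly.
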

\begin{proof}
Let $AA^*=A_0+A_1\jmath$. Then $$\begin{bmatrix}nI_n&0\\0&nI_n \end{bmatrix}=\chi_{AA^*}=\begin{bmatrix}A_0&A_1\\-\overline{A_1}&\overline{A_0} \end{bmatrix} $$
The result follows.
\end{proof}

Using the lemma, one can show
\begin{theorem}
Let $A$ be a matrix over the quaternions. If $\chi_A$ is a complex Hadamard matrix of order $2n$, then $\frac{1}{\sqrt{2}}A$ is a quaternionic Hadamard matrix of order $n$. 
\end{theorem}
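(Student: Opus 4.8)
The plan is to reduce the claim to the characterization of quaternionic Hadamard matrices via the identity $HH^*=nI_n$ together with unit-norm entries, feeding it through the multiplicativity of the complex adjoint. First I would unpack the hypothesis: since $\chi_A$ is $2n\times 2n$, saying that $\chi_A$ is a complex Hadamard matrix of order $2n$ means exactly that every entry of $\chi_A$ has modulus $1$ and that $\chi_A(\chi_A)^*=2nI_{2n}$.

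Next I would transport the orthogonality relation back to $A$ itself. By property $4$ of the complex adjoint, $(\chi_A)^*=\chi_{A^*}$, and by the multiplicativity in property $2$, $\chi_A(\chi_A)^*=\chi_A\chi_{A^*}=\chi_{AA^*}$. Hence $\chi_{AA^*}=2nI_{2n}$. The Lemma was stated with the scalar $n$, but its block-diagonal comparison argument applies verbatim with $2n$ in place of $n$, so it gives $AA^*=2nI_n$. Consequently $\left(\tfrac{1}{\sqrt2}A\right)\left(\tfrac{1}{\sqrt2}A\right)^*=\tfrac12 AA^*=nI_n$, which is the orthogonality half of the Proposition's criterion.

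Then I would verify the entrywise norm condition, which is where the factor $\tfrac{1}{\sqrt2}$ is forced. Writing $A=A_0+A_1\jmath$ with $A_0,A_1$ complex, the entries of $A_0$ and of $A_1$ occur (up to conjugation) as entries of $\chi_A$, hence each has complex modulus $1$. The $(j,k)$ entry of $A$ is the quaternion $x+y\jmath$ with $x=(A_0)_{jk}$ and $y=(A_1)_{jk}$ in $\complex$, and such a quaternion has norm $\sqrt{|x|^2+|y|^2}=\sqrt{1+1}=\sqrt2$. Dividing by $\sqrt2$ therefore makes every entry of $\tfrac{1}{\sqrt2}A$ have norm exactly $1$. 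Applying the Proposition to the $n\times n$ matrix $\tfrac{1}{\sqrt2}A$, which now satisfies both $HH^*=nI_n$ and the unit-norm-entry condition, completes the proof.

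I expect the main care point to be the bookkeeping of scalar factors rather than any deep obstacle: one must keep straight that $\chi_A$ has order $2n$ (so its Hadamard identity carries the scalar $2n$), that the Lemma is invoked with this scalar $2n$, and that the two independent modulus-$1$ complex entries combine to give quaternionic norm $\sqrt2$, so that the normalization $\tfrac{1}{\sqrt2}$ simultaneously rescales $AA^*=2nI_n$ down to $nI_n$ and the entry norms down to $1$. The consistency of these two rescalings is exactly what makes the statement work.
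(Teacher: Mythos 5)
Your proposal is correct and takes essentially the same route as the paper: transport orthogonality back to $A$ via the multiplicativity of the complex adjoint and the lemma $\chi_{AA^*}=cI_{2n}\Rightarrow AA^*=cI_n$, then observe that each entry $x+y\jmath$ of $A$ has its complex components $x,y$ appearing as unit-modulus entries of $\chi_A$, so the entry has norm $\sqrt{2}$ and the factor $\frac{1}{\sqrt{2}}$ normalizes it. If anything, your bookkeeping is slightly more careful than the paper's, which writes the orthogonality equivalence with scalar $n$ where the hypothesis actually yields $\chi_A(\chi_A)^*=2nI_{2n}$; your explicit derivation of $AA^*=2nI_n$ and its rescaling to $nI_n$ tidies up that point without changing the argument.
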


\begin{proof}
Using the above lemma, one can show that $AA^*=nI_n$ if and only if
$\chi_{A}(\chi_{A})^*=nI_{2n}$. This shows that the inner product of any two distinct rows or any two distinct columns are zero. It remains to show that $\frac{1}{\sqrt{2}}A$ has entries of norm 1. Since $\chi_A$ is a complex Hadamard matrix, every entry has norm 1.
By construction, each entry of $A$ will be a sum of two complex numbers with the second multiplied by $\jmath$ on the right $a+b\imath+(c+d\imath)\jmath$, where $a,b,c,d\in\reals$. If the complex numbers come from $\chi_A$, $a^2+b^2=c^2+d^2=1$ hence the norm of an entry of $A$ will be $a^2+b^2+c^2+d^2=2$.
Hence $\frac{1}{\sqrt{2}}(a+b\imath +c\jmath +d\kappa)$ will have norm 1. As this entry was arbitrary, we have that $\frac{1}{\sqrt{2}}A$ is a quaternionic Hadamard matrix.
\end{proof}
 
 Since any quaternion $q$ can be written as $q_0+q_1\imath+q_2\jmath+q_3\kappa$, one can write a quaternionic matrix $A$ as $A_0+A_1\imath+A_2\jmath+A_3\kappa$. This in turn gives rise to a real adjoint of a matrix. The following is from \cite{realadjoint}:
 
 \begin{definition}
 Let $A=A_0+A_1\imath+A_2\jmath+A_3\kappa$ be an $n\times n$ matrix. Then the \textbf{real adjoint} of $A$, denoted $\psi_A$ is the $4n\times 4n$ real matrix $$\begin{bmatrix} A_0 &A_1 &-A_2 &A_3 \\A_1 &-A_0 &-A_3 &-A_2 \\A_2 &-A_3 &A_0 &A_1 \\A_3 &A_2 &A_1 &-A_0 \end{bmatrix}.$$
 \end{definition}
 
Similar to complex adjoints, real adjoints have similar properties, one from \cite{realadjoint} is listed here.
 \begin{proposition}
 $\psi_A$ is orthogonal if and only if $A$ is unitary.
 \end{proposition}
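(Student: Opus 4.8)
The plan is to reduce orthogonality of $\psi_A$ to unitarity of $A$ by routing through the left-regular real representation of $M_n(\quaternions)$, in the same spirit as the complex-adjoint argument used earlier for $\chi_A$. The first thing I would check is whether $\psi$ itself is a unital, transpose-respecting homomorphism, as $\chi$ was. A one-line computation shows it is not: taking $A=I_n$ (so $A_0=I_n$, $A_1=A_2=A_3=0$) gives $\psi_{I_n}=\mathrm{diag}(I_n,-I_n,I_n,-I_n)$, which is orthogonal but not $I_{4n}$. Hence the clean homomorphism proof used for $\chi$ cannot simply be copied, and the discrepancy must first be factored out.

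To do this I would set $D:=\mathrm{diag}(I_n,-I_n,I_n,-I_n)$, a fixed orthogonal involution ($D^{\top}=D$, $D^2=I_{4n}$), and define $L_A:=\psi_A D$, so that $\psi_A=L_AD$. Negating the second and fourth block-columns of $\psi_A$ produces
$$L_A=\begin{bmatrix} A_0&-A_1&-A_2&-A_3\\ A_1&A_0&-A_3&A_2\\ A_2&A_3&A_0&-A_1\\ A_3&-A_2&A_1&A_0\end{bmatrix},$$
which is exactly the left-regular representation obtained by replacing each quaternion $q_0+q_1\imath+q_2\jmath+q_3\kappa$ by its $4\times4$ real matrix of left multiplication. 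The heart of the argument is then to verify the three representation identities $L_{I_n}=I_{4n}$, $L_{AB}=L_AL_B$, and $L_{A^*}=L_A^{\top}$. Writing $L_A=\sum_{\mu}L_{e_\mu}\otimes A_\mu$ with $e_0=1,\ e_1=\imath,\ e_2=\jmath,\ e_3=\kappa$, these reduce blockwise to the scalar facts $L_{e_\mu e_\nu}=L_{e_\mu}L_{e_\nu}$ and $L_{\overline{e_\mu}}=L_{e_\mu}^{\top}$, i.e. to the multiplication table $\imath^2=\jmath^2=\kappa^2=-1$, $\imath\jmath=\kappa$, and so on.

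With these in hand the equivalence is immediate. Since $L$ is a unital homomorphism with $L_{A^*}=L_A^{\top}$, we have $AA^*=I_n \iff L_AL_A^{\top}=L_{AA^*}=L_{I_n}=I_{4n} \iff L_A \text{ orthogonal}$; and for square matrices over the division ring $\quaternions$, $AA^*=I_n$ is equivalent to $A$ being unitary, since one-sided inverses of square matrices over $\quaternions$ are two-sided. Finally, because $D$ is orthogonal and $\psi_A=L_AD$, the matrix $\psi_A$ is orthogonal iff $L_A$ is orthogonal. Chaining these equivalences yields $\psi_A$ orthogonal $\iff A$ unitary.

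The step I expect to be the main obstacle is the sign bookkeeping in $L_{AB}=L_AL_B$ and $L_{A^*}=L_A^{\top}$. One must be careful that the chosen convention is genuinely \emph{left} multiplication (it is $x\mapsto Ax$, not $x\mapsto xA$, that is multiplicative, precisely because of noncommutativity), and that the component-grouped block layout used to define $\psi$ coincides with the scalar representation after the twist by $D$. Once the correct orientation is fixed, each identity is a finite, if sign-sensitive, check against the quaternion relations.
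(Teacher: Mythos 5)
Your proof is correct, and it is worth noting that the paper itself offers no proof of this proposition: the statement is simply quoted from \cite{realadjoint}, so your argument supplies something the paper leaves entirely to the literature. Your reduction checks out concretely: with $D=\mathrm{diag}(I_n,-I_n,I_n,-I_n)$ one has exactly $\psi_A=L_A D$ for the block matrix $L_A$ you display, $L_A=\sum_{\mu}L_{e_\mu}\otimes A_\mu$ is the blockwise left regular representation, and the identities $L_{I_n}=I_{4n}$, $L_{AB}=L_AL_B$, $L_{A^*}=L_A^{\top}$ reduce, as you say, to the scalar multiplication table; then $\psi_A\psi_A^{\top}=L_A DD^{\top}L_A^{\top}=L_AL_A^{\top}$ closes the loop. (For the backward direction of $AA^*=I_n \iff L_AL_A^{\top}=I_{4n}$ you implicitly use injectivity of $L$, which is immediate since the first block column of $L_A$ lists $A_0,A_1,A_2,A_3$; say so.) Your route also has the virtue of paralleling the paper's self-contained treatment of $\chi_A$ in the preceding theorem and of explaining the otherwise mysterious sign pattern in $\psi_A$ as a twist of the regular representation by the orthogonal involution $D$. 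One caveat on the ``sign bookkeeping'' you flagged: you verify the table under the standard Hamilton convention $\imath\jmath=\kappa$, whereas Definition 1 of the paper literally states $\imath\jmath\kappa=1$, which forces $\imath\jmath=-\kappa$; under that literal convention your matrices $L_q$ implement right rather than left multiplication, so $A\mapsto L_A$ is an anti-homomorphism and the displayed identities would need rearranging (the conclusion survives, since $AA^*=I_n$ and $A^*A=I_n$ are equivalent for square quaternionic matrices, as you note). The paper's relation is almost certainly a typo for $\imath\jmath\kappa=-1$, so your table is the intended one, but your proof should state the convention explicitly since its sign checks depend on it.
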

 
 We now create a similar result to the previous proposition:
 
 \begin{proposition}
 If $\psi_A$ is a real Hadamard matrix of order $4n$, then $\frac{1}{2}A$ is a quaternionic Hadamard matrix of order $n$.
 \end{proposition}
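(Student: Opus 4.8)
The plan is to mirror the proof of the complex-adjoint theorem, replacing the complex adjoint $\chi_A$ and its lemma by the real adjoint $\psi_A$ and the cited proposition that $\psi_A$ is orthogonal if and only if $A$ is unitary. By the characterizing Proposition (a quaternionic matrix $H$ is Hadamard of order $n$ iff $HH^* = nI_n$ and every entry has norm $1$), it suffices to verify two things about $\frac12 A$: that its entries have unit norm, and that $(\frac12 A)(\frac12 A)^* = nI_n$. I would treat these separately.

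For the entrywise norm, I would read off the first block-column of $\psi_A$. Writing the $(i,j)$ entry of $A$ as $(a_0)_{ij} + (a_1)_{ij}\imath + (a_2)_{ij}\jmath + (a_3)_{ij}\kappa$, the definition of $\psi_A$ places exactly these four real numbers, all with a $+$ sign, in the positions $(i,j)$, $(n+i,j)$, $(2n+i,j)$, $(3n+i,j)$. Since $\psi_A$ is a real Hadamard matrix, every entry is $\pm 1$, so $(a_0)_{ij}^2 + (a_1)_{ij}^2 + (a_2)_{ij}^2 + (a_3)_{ij}^2 = 4$. Hence each entry of $A$ has modulus $2$, and each entry of $\frac12 A$ has modulus $1$.

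For the orthogonality, I would combine homogeneity of the real adjoint with the cited proposition. Because a real scalar commutes with $\imath,\jmath,\kappa$, scaling $A$ by $c\in\reals$ scales every block of $\psi_A$ by $c$, so $\psi_{cA} = c\,\psi_A$ directly from the definition. Setting $\tilde A = \frac{1}{2\sqrt n}A$ and using that $\psi_A$ is real Hadamard of order $4n$ (so $\psi_A(\psi_A)^T = 4n\,I_{4n}$), I get $\psi_{\tilde A}(\psi_{\tilde A})^T = \frac{1}{4n}\psi_A(\psi_A)^T = I_{4n}$. Thus $\psi_{\tilde A}$ is orthogonal, and the cited proposition forces $\tilde A$ to be unitary, i.e. $\tilde A\tilde A^* = I_n$. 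Unwinding the scaling gives $AA^* = 4n\,I_n$, whence $(\frac12 A)(\frac12 A)^* = \frac14 AA^* = nI_n$. Together with the unit-norm entries, the characterizing Proposition yields that $\frac12 A$ is a quaternionic Hadamard matrix of order $n$.

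I do not expect a genuine obstacle; the only points demanding care are the factor-of-$4$ bookkeeping (each quaternionic entry contributes four $\pm 1$ entries of $\psi_A$, and $\psi_A(\psi_A)^T = 4nI_{4n}$ rather than $nI_{4n}$) and the verification that the four real numbers read off the first block-column are exactly the components of the corresponding entry of $A$ with the stated signs. An alternative to the rescaling step would be to establish the homomorphism identities $\psi_{AB} = \psi_A\psi_B$ and $\psi_{A^*} = (\psi_A)^T$ for the real adjoint and then prove a lemma, analogous to the complex-adjoint lemma, that $\psi_{AA^*} = 4nI_{4n}$ implies $AA^* = 4nI_n$; the rescaling route is cleaner since it invokes the already-cited orthogonality proposition directly.
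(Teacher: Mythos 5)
Your proof is correct, and it is worth flagging that the paper gives \emph{no} proof of this proposition at all: it is stated bare, with the implicit suggestion that one should mirror the complex-adjoint theorem, whose proof ran through the identities $\chi_{AB}=\chi_A\chi_B$, $\chi_{A^*}=(\chi_A)^*$ and the lemma recovering $AA^*$ from $\chi_{AA^*}$. Your main route --- homogeneity $\psi_{cA}=c\,\psi_A$, rescaling to $\tilde A=\frac{1}{2\sqrt n}A$ so that $\psi_{\tilde A}$ is orthogonal, then invoking the cited proposition that $\psi_A$ is orthogonal if and only if $A$ is unitary --- is not merely a stylistic variant of that template; it is the \emph{safer} choice, because the real representation $\psi$ of \cite{realadjoint} is not multiplicative and does not intertwine $*$ with transpose. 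Already for $1\times 1$ matrices, $\psi_\imath=\left[\begin{smallmatrix}0&1&0&0\\1&0&0&0\\0&0&0&1\\0&0&1&0\end{smallmatrix}\right]$ gives $(\psi_\imath)^2=I_4$ while $\psi_{\imath^2}=\psi_{-1}=\mathrm{diag}(-1,1,-1,1)$, and $\psi_{\overline{\imath}}=-\psi_\imath\neq(\psi_\imath)^T$; so the ``alternative'' you sketch at the end (establishing $\psi_{AB}=\psi_A\psi_B$ and $\psi_{A^*}=(\psi_A)^T$ and then an analogue of the complex-adjoint lemma) would actually fail, as would a naive transcription of the paper's complex-adjoint proof. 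Your rescaling argument sidesteps this entirely, using only the block definition of $\psi$ (for homogeneity, and for reading the four components of each entry of $A$, all with $+$ signs, off the first block column) together with the one property the paper explicitly cites; the factor bookkeeping ($\psi_A(\psi_A)^T=4nI_{4n}$, each entry of $A$ of Euclidean length $2$, hence $AA^*=4nI_n$ and $(\frac12A)(\frac12A)^*=nI_n$) is exactly right --- indeed more careful than the paper's own complex-adjoint proof, which slips on the analogous factor by writing $nI_{2n}$ where $2nI_{2n}$ is meant.
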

 
  Note that these propositions work only if of one has a specific kinds of Hadamard matrices. If complex, one needs a Hadamard matrix of the form $\begin{bmatrix}A_0&A_1\\-\overline{A_1}&\overline{A_0} \end{bmatrix}$. If real, one needs a Hadamard matrix of the form $$\begin{bmatrix} A_0 &A_1 &-A_2 &A_3 \\A_1 &-A_0 &-A_3 &-A_2 \\A_2 &-A_3 &A_0 &A_1 \\A_3 &A_2 &A_1 &-A_0 \end{bmatrix}.$$
  One wonders how one can find such complex or real Hadamard matrices.
 
 \begin{definition}
 A complex (or real) Hadamard matrix $A$ is \textbf{quaternionically compliant} if it is equivalent to the forms discussed above.
 \end{definition}

\begin{question}
Given a positive integer $n$, can one find quaternionically compliant complex Hadamard matrices or order $2n$ (or real quaternionically compliant matrices of order $4n$)?
\end{question}

%%%%%%%%%%%%%%%%%%%%%%%%%%%%%%%%%%%%%%%%%%%%%%

% There are various ways to include postscript figures in a
% LaTeX file. Consult the manual for details.
% Below is a particular example with an encapsulated postscript file
% Note that the epsfig package must be included in the preamble.

%%%%%%%%%%%%%%%%%%%%%%%%%%%%%%%%%%%%%%%%%%%%%%%%%%%%%%%%%%%%%

\end{document}